\documentclass[12pt]{amsart}

\usepackage{amsmath,amssymb}
\usepackage{tikz}
\usepackage{graphicx}
\usepackage{float}
\usetikzlibrary{positioning,calc}
\usepackage{comment}
\usepackage{siunitx}
\usepackage{array} 
\usepackage{url}
\usepackage{hyperref}
\usepackage{a4wide}

\newcommand{\md}{{\rm d}}
\newcommand{\dx}{\, \md x}

\newcommand{\R}{\mathbb{R}}
\newcommand{\N}{\mathbb{N}}

\newcommand{\om}{\Omega}

\newcommand{\cof}{{\rm cof}\,}

\newcommand{\im}{\mathrm{im}\,}

\newcommand{\D}{\mathbb{D}}

\def\XXint#1#2#3{{\setbox0=\hbox{$#1{#2#3}{\int}$}
     \vcenter{\hbox{$#2#3$}}\kern-.5\wd0}}

\newtheorem{definition}{Definition}[section]
\newtheorem{lemma}[definition]{Lemma}
\newtheorem{theorem}[definition]{Theorem}
\newtheorem{proposition}[definition]{Proposition}
\newtheorem{corollary}[definition]{Corollary}
\newtheorem{remark}[definition]{Remark}

\newcommand{\referee}{\color{black}}

 \def\FS#1#2#3#4 {\scalebox{0.7}{{\begin{tabular}{|c|c|} \hline $#2$ &$#1$ \\ \hline $#3$ & $#4$ \\ \hline \end{tabular}}}}
 \def\INS#1#2#3#4 {\scalebox{0.8}{{\begin{tabular}{|c|c|c|c|} \hline $#1$ &$#2 $ & $#3$ & $#4$   \\ \hline \end{tabular}}}}

\title{ Latent convexity as a tool implying  uniqueness results for polyconvex integrands}

\begin{document}

\author[J. Bevan]{Jonathan J. Bevan}
\address[J. Bevan]{School of Mathematics and Physics, University of Surrey, Guildford, GU2 7XH, United Kingdom. }
\email{j.bevan@surrey.ac.uk}

\author[M. Kru\v{z}\'{i}k]
{Martin Kru\v{z}\'{i}k}
\address[M. Kru\v{z}\'{i}k]{ Czech Academy of Sciences, Institute of Information Theory and Automation,
Pod vod\'arenskou v\v{e}\v z\'\i\ 4, 182 00, Prague 8, Czechia $\&$
Department of Physics, Faculty of Civil Engineering, Czech Technical University in Prague, Thákurova 7, 166 29 Prague 6, Czechia.
}
\email{kruzik@utia.cas.cz}

\author[J. Valdman] {Jan Valdman} 
\address[J. Valdman]{Czech Academy of Sciences, Institute of Information Theory and Automation, Pod vod\'arenskou v\v{e}\v z\'\i\ 4, 182 00, Prague 8, Czechia 
}
\email{jan.valdman@utia.cas.cz}

\subjclass[2020]{49J40, 65K10}

\title[Mean Hadamard inequalities and convex functionals]{Sharp mean Hadamard inequalities and polyconvex integrands that give rise to convex functionals}
\begin{abstract} 
We investigate several instances of the Hadamard inequality in the mean in two dimensions. As a consequence, we prove the uniqueness of minimizers of an integral functional with a polyconvex integrand, subject to mixed Dirichlet and Neumann boundary conditions. The theoretical findings are complemented by computational experiments that illustrate the behavior of the minimizers.\end{abstract}
\maketitle
\section{Introduction}

Let $\Omega$ be a bounded Lipschitz domain in $\R^2$ and let $f\in L^\infty(\Omega)$.  This paper is concerned with the convexity of the functional
\begin{align}\label{eye}I(\varphi)& = \int_{\Omega} |\nabla \varphi|^2+ f(x)\det \nabla \varphi \dx \end{align}
defined on $W_0^{1,2}(\Omega;\R^2)$, which given its quadratic dependence on $\varphi$ is equivalent to the condition 
\begin{align}\label{meanHad}
     \int_{\om} |\nabla \varphi|^2 + f(x)  \det \nabla \varphi \, \dx \geq 0 \quad  \forall \varphi \in W_0^{1,2}(\Omega;\R^2).
    \end{align}
Inequality \eqref{meanHad} is immediately implied by the pointwise Hadamard inequality for matrices $$|A|^2 \geq 2|\det A| \quad A  \in \R^{2 \times 2} $$ 
for $f$ with the property that 
\begin{align}\label{strongcondition} |f(x) - \langle f \rangle_{\om}| \leq 2 \quad \mathrm{a.e.} \ x \in \om,\end{align}
which follows by coupling \eqref{strongcondition} with the well-known fact that the function $A \mapsto \det A$ is a null Lagrangian, cf.~e.g.~\cite{dacorogna}.   Here, 
$$  \langle f \rangle_{\om}:=\frac{1}{\mathcal{L}^2(\om)}\int_{\om} f(x) \, \dx$$
denotes the mean value of $f$ over the domain $\om$, and $\mathcal{L}^2(\om)$ is the two-dimensional Lebesgue measure of $\om$.

In fact, \eqref{meanHad} can also be shown to hold even when condition \eqref{strongcondition} fails, which is how \eqref{meanHad} earns its name of a `Hadamard-in-the-mean' inequality, such as when $f=M \chi_{\om'}$ for any $M$ with $|M|\leq 4$ and $\om'$ any reasonable subdomain of $\om$.  See \cite[Proposition 3.4]{BKV23} for details, including the use of ideas on quasiconvexity at the boundary due to Mielke and Sprenger \cite{mielke-sprenger} that are needed to show that the condition $|M|=4$ is sharp.  

In this and in other examples, we stress that although the integrand
$$ W(x,A):= |A|^2 + f(x) \det A $$
is polyconvex \cite{Ba77}, it does not automatically follow that $I(\varphi) \geq I(0)=0$ for any $\varphi \in W_0^{1,2}(\om,\R^2)$.  One reason is that we may not assume that 
$$\int_{\om}W(x,\nabla \varphi) \, \dx \geq \int_{\om}W(x,0)\, \dx$$
holds in the case of an $x$-dependent quasiconvex integrand. Another reason is that through \cite[Proposition 6.2]{BKV23}, a clear link is made between the sequential lower semicontinuity of $I$ in $W_0^{1,2}(\om,\R^2)$ and the nonnegativity of $I(\varphi)$ for $\varphi \in W_0^{1,2}(\om,\R^2)$.  Nor are standard devices such as studying solutions of the Euler-Lagrange equations of any use.  The positivity or otherwise of $I(\varphi)$ therefore has to be decided by other means.

Although we aim for a general characterization of those $f$ for which \eqref{meanHad} holds, the approach we have taken in \cite{BKV23} and \cite{BKV25} has necessarily focused on establishing \eqref{meanHad} for certain key examples of $f$, chief amongst which is the Hadamard-in-the-mean inequality 
\begin{align}\label{ins:c}
    \int_{R_{-2}} |\nabla u|^2 - c \det \nabla u \, \dx + \int_{R_{-1}\cup R_{1}} |\nabla u|^2 \, \dx + 
    \int_{R_{2}} |\nabla u|^2 + c \det \nabla u \, \dx \geq  0
\end{align}
for all $u \in W_0^{1,2}(\om,\R^2)$, where $|c| \leq 4$ and
\begin{align}\label{Omega}
\Omega := R_{-2} \cup R_{-1} \cup R_1 \cup R_2
\end{align}
is formed of four rectangles arranged in a row, as shown in Figure~\ref{pic:pureinsulation}. 

\begin{figure}[H]
\centering
\begin{minipage}{0.65\textwidth}
\centering
\begin{tikzpicture}[scale=0.8]
\node (A) at (-4,-2) {}; 
\node[right=2 of A.center] (B) {};
\node[right=4 of A.center] (O) {};
\node[right=4 of B.center] (C) {};
\node[right=2 of C.center] (D) {};

\node[above=4 of A.center] (A') {};
\node[above=4 of B.center] (B') {};
\node[above=4 of O.center] (O') {};
\node[above=4 of C.center] (C') {};
\node[above=4 of D.center] (D') {};

\node[right=2 of B.center] (BC) {};
\node[right=2 of B'.center] (B'C') {};

\filldraw[thick, top color=gray!30,bottom color=gray!30] (A.center) rectangle node{$-c$} (B'.center);
\filldraw[thick, top color=gray!30,bottom color=gray!30] (C.center) rectangle node{$+c$} (D'.center);

\node (R1) at ($(B)!0.5!(B'C')$) {0};
\node (R-1) at ($(BC)!0.5!(C')$) {0};

\node [below right = 0.1 of A'.center] {$R_{-2}$};
\node [below right = 0.1 of B'.center] {$R_{-1}$};
\node [below right = 0.1 of O'.center] {$R_{1}$};
\node [below right = 0.1 of C'.center] {$R_{2}$};

\draw[thick] (B.center) -- (C.center);
\draw[thick] (B'.center) -- (C'.center);
\draw[thick] (BC.center) -- (B'C'.center);
\end{tikzpicture}
\end{minipage}
\hfill
\begin{minipage}{0.32\textwidth}
\small
\begin{align*}
R_{-2} &= (-1, -\tfrac{1}{2}) \times (-\tfrac{1}{2}, \tfrac{1}{2}), \\
R_{-1} &= (-\tfrac{1}{2}, 0) \times (-\tfrac{1}{2}, \tfrac{1}{2}), \\
R_{1}  &= (0, \tfrac{1}{2}) \times (-\tfrac{1}{2}, \tfrac{1}{2}), \\
R_{2}  &= (\tfrac{1}{2}, 1) \times (-\tfrac{1}{2}, \tfrac{1}{2}).
\end{align*}
\end{minipage}
\caption{Distribution of rectangles.}
\label{pic:pureinsulation}
\end{figure}
In terms of the functional in \eqref{eye}, the corresponding weight function $f$ is 
\begin{align*}
    f(x) = -c \chi_{R_{-2}} + c \chi_{R_{2}},
\end{align*}
and the central region $R_{-1}\cup R_{1}=(-\frac{1}{2},\frac{1}{2})\times (-\frac{1}{2},\frac{1}{2})$ `insulates' the regions $R_{-2}$ and $R_2$, where $f$ is non-zero, from one another.  Demonstrating \eqref{ins:c} is therefore dubbed the `insulation problem', and it was shown in \cite[Proposition 4.5]{BKV23} that \eqref{ins:c} holds for $c$ in the range $(2,2+\epsilon_0)$ for some $\epsilon_0$.  One of the main results of this paper, Theorem \ref{Thm:insulation}, is that \eqref{ins:c} holds for any $c$ such that $|c|\leq 4$, where the upper bound of $4$ is sharp, and that for $|c|\leq 4$, the unique minimizer of the functional in \eqref{ins:c} is $u=0$.  The technique we use takes advantage of the symmetries of $\om$ and of the functional itself: see Section \ref{insuproper} for the details.  In Theorem \ref{Th:delta} and Proposition \ref{asymptotic}, we consider the effect on $c$ of varying the width of the insulation layer.  The theme of both of these results is that the nonnegativity of the adjusted functional is maintained as long as $|c-2|$ is subordinated to the width of the insulation layer.  The results are not sharp, in contrast to Theorem \ref{Thm:insulation}. 

One can also view the functional in \eqref{eye} as a general form of an `excess functional' associated with an energy $E$ and a suitably-defined stationary point $u_0$, say, so that 
\begin{align}\label{splitting} E(u) = E(u_0)+I(\varphi),\end{align}
with $\varphi=u-u_0$. This is the situation discussed in  \cite{Bevan-14} and \cite{BD22} where, in both cases, the functional $I$ is of the form \eqref{eye}, $f=C\ln(|\cdot|)$, $C$ is constant, and the domain of integration is the unit ball in $\R^2$.  For large enough $C$, \cite[Proposition 3.5 (i)]{Bevan-14} shows that \eqref{meanHad} fails;  by contrast, it can be deduced from \cite[Theorem 1.2]{BD22} that, for sufficiently small $C$, \eqref{meanHad} holds.

 We can take $E=I$ in \eqref{splitting}.  Indeed, if 
 $u=u_0$ on $\Gamma_D\subset \partial \om$ with $u_0\in W^{1,2}(\om; \R^2)$ given then 
 \begin{align}\label{sum}
I(u_0+\varphi)= I(u_0) + \langle I'(u_0),\varphi\rangle +I(\varphi)\ , 
\end{align}
where $I$ is defined in \eqref{eye} and $\varphi\in W^{1,2}(\om;\R^2)$ such that $\varphi=0$ on $\Gamma_D$.  Note that if $u_0$ solves the Euler-Lagrange equations  of  $E$ in the weak sense, i.e., if $\langle E'(u_0),\varphi\rangle=0$  and 
$I(\varphi)\ge 0$ for every $\varphi$ defined above, then $u_0$ is a global  minimizer of $E$. 
This also means that \eqref{sum} implies that for every $\varphi\in W^{1,2}(\om;\R^2)$ such that $\varphi=0$ on $\Gamma_D$ it holds
\begin{align}
    I(u_0+\varphi)\ge I(u_0) + \langle I'(u_0),\varphi\rangle, 
\end{align}
i.e., $I$ is convex.
If we can show that $I(\varphi)=0$ only if $\varphi=0$, then $u_0$ is the unique minimizer of $I$, and $I$ is strictly convex.
This idea leads, in the case $n=2$, to a new technique for finding global minimizers of the Dirichlet energy $\D(u)=\int_{\om} |\nabla u|^2 \dx$ in classes where the Jacobian $\det \nabla u$ is \emph{a priori} prescribed pointwise a.e., enabling us to solve the sort of constrained minimization problem 
that typically arises in incompressible nonlinear elasticity theory (where $g \equiv 1$), but with one important difference.  This is that having first prescribed boundary data $u_0$ and a suitable pressure $f$ in
\begin{align}\label{itwo}I(u):=\int_{\Omega} |\nabla u|^2 + f(x) \,\det \nabla u \, \dx,\end{align}
the data $g$ in the Jacobian constraint emerges (rather than being prescribed \emph{a priori}) as $g:=\det \nabla U$, where $U$ minimizes $I(\cdot)$ in the unconstrained class $W^{1,2}_{u_{0}}(\om,\R^2)$, i.e., the Sobolev space with $u_0$ on $\partial\om$. 

Using this technique, we are able to prove, for example, that for suitable constants $\zeta$ and $\xi$ the map 
\begin{align}\label{examplediskdisk}
    u(x):=\left\{\begin{array}{l l} \zeta x & \ \ x \in B(0,\rho) \\
    \left(\xi+\frac{1-\xi}{|x|^2}\right)x & \ \ x \in B(0,1)\setminus B(0,\rho) \end{array}\right.
    \end{align}
is the unique global minimizer of the Dirichlet energy in 
 $   \left\{v \in W_{\textrm{id}}^{1,2}(B(0,1);\R^2): \ \det \nabla v = g \ \textrm{a.e.}\right\}, $
where $g(x):=\zeta^2$ if $x\in B(0,\rho)$ and $g(x):=\xi^2-(1-\xi)^2 |x|^{-4}$ otherwise.  Here, $B(0,\rho)$ stands, as usual, for the ball in $\R^2$ centered at $0$ and of radius $\rho$; see Figure \eqref{problem:disk-disk}. 

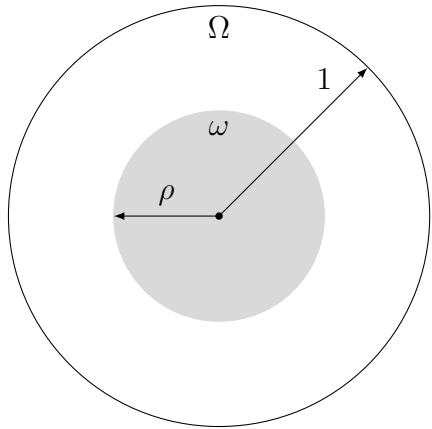
\begin{figure}[H]
\centering
\begin{minipage}{1\textwidth}
\centering
\begin{tikzpicture}[scale=1.4,>=latex]

\draw (0,0) circle (2.0cm);             
\draw[gray!30!,fill] (0,0) circle(1.0cm); 

\fill (0,0) circle (1pt); 

\node[below] at (0,1.0) {$\omega$}; 
\node[below] at (0,2.0) {$\Omega$}; 

\draw[->] (0,0) -- (-1.0,0); 
\node[above] at (-0.5,0) {$\rho$}; 

\draw[->] (0,0) -- (1.41,1.41); 
\node[above=4pt] at (1.0,1.0) {$1$}; 

\end{tikzpicture}
\end{minipage}
\caption{Illustration of the disk-disk problem for $\rho = 0.5$.}
\label{problem:disk-disk}
\end{figure}

The relevant weight function or pressure $f=M\chi_{_{B(0,\rho)}}$, where $M$ is a constant such that $|M|<4$.  The functional $I(\varphi)$ is mean coercive in the sense that there exists $\gamma>0$ such that 
\begin{align*}
    I(\varphi) & \geq \gamma \int_{\om}|\nabla \varphi|^2 \, \dx \quad \varphi \in W_0^{1,2}(\om,\R^2),
\end{align*}
which enables us to minimise $I(\cdot)$ in $W_{u_{0}}^{1,2}(B,\R^2)$ and so derive the Euler-Lagrange equation
\begin{align}\label{eltest}
    \int_{B}2\nabla u \cdot \nabla \varphi + M\chi_{_{B(0,\rho)}}\, \cof\nabla u\cdot \nabla\varphi \dx = 0 \quad \quad \varphi \in H^1_0(B,\R^2).
\end{align}

Nevertheless, it is still possible to show that the mean coercivity of $I(\cdot)$ is sufficient to improve the regularity of $W^{1,2}$ solutions of \eqref{eltest} to $C^{0,\alpha}$ for some $\alpha >0$, echoing the results of Morrey \cite[Theorem 4.3.1]{Mo66} and Giaquinta and Giusti \cite{GiaquintaGiusti1982}, for example, and enabling us to `join' pieces of the solution to \eqref{eltest} across the set $\partial B(0,\rho)$ where $f$ is discontinuous, leading in particular to \eqref{examplediskdisk}.  These and related results appeared in \cite{BKV25}.

The paper is organized as follows. In Section~\ref{insuproper}, we introduce and solve the so-called \emph{canonical insulation problem}, which consists of two rectangular regions where $f = c$ and $f = -c$, separated by a subdomain in which $f = 0$. The main result of this section is Theorem~\ref{Thm:insulation}. Proposition~\ref{first-order} further shows that solutions to the Euler--Lagrange equations are minimizers of the associated functional $E_M$ defined in \eqref{EM}. Moreover, uniqueness of the minimizer holds under the additional assumption that $E_M$ is mean coercive.

Section~\ref{thinsulate} addresses configurations in which the intermediate region $f = 0$ between $f = c$ and $f = -c$ is thinner. The main result of this section is Proposition~\ref{asymptotic}.

Finally, Section~\ref{numerics} presents numerical experiments which suggest that the inequalities in Theorem~\ref{Th:delta} and Proposition~\ref{asymptotic} could, in fact, be further strengthened.

\section{The canonical insulation problem}\label{insuproper}


We now again focus on the domain 
\begin{equation}\label{domain_insulation}
\Omega := R_{-2} \cup R_{-1} \cup R_1 \cup R_2,
\end{equation}
defined in \eqref{Omega} and shown in Figure~\ref{pic:pureinsulation}.

Let $c>0$ be constant, define the piecewise constant function $f$ by
\begin{align}\label{f_four_square} f(x):=-c\chi_{_{R_{-2}}} + c \chi_{_{R_{2}}}, \end{align}
and form the functional
\begin{align}\label{I_four_square} I\left(\varphi,\INS{-c}{0}{0}{c} \,\right)& := \int_{\om} |\nabla \varphi|^2+f(x) \det \nabla \varphi(x) \dx \\ & = 
\nonumber \int_{\om}|\nabla \varphi|^2 \dx 
- c \int_{R_{-2}} \det \nabla \varphi \dx
+ c\int_{R_2} \det \nabla \varphi \dx 
.
\end{align}


\begin{theorem}\label{Thm:insulation}
Assume that $|c|\le 4$. Then $I\left(\varphi,\INS{-c}{0}{0}{c} \,\right)\ge 0$ for every $\varphi\in W^{1,2}_0(\Omega;\R^2)$, and the upper bound of $4$ is sharp.
\end{theorem}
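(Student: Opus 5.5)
The plan is to use the reflection symmetry of $\Omega$ and the oddness of $f$ to reduce the insulation problem to a one-sided problem on a half of $\Omega$. Let $R(x_1,x_2)=(-x_1,x_2)$, which maps $\Omega$ to itself and swaps $R_{-2}$ with $R_2$ and $R_{-1}$ with $R_1$. For $\varphi\in W^{1,2}_0(\Omega;\R^2)$ put $\psi:=\varphi\circ R$. Then $\nabla\psi(x)=\nabla\varphi(Rx)R$ and $\det\nabla\psi(x)=-\det\nabla\varphi(Rx)$, so
\begin{align*}
-c\int_{R_{-2}}\det\nabla\varphi\dx=c\int_{R_2}\det\nabla\psi\dx .
\end{align*}
Writing $H:=R_1\cup R_2=(0,1)\times(-\tfrac12,\tfrac12)$, this gives
\begin{align*}
I\left(\varphi,\INS{-c}{0}{0}{c} \,\right)=J(\varphi|_H)+J(\psi|_H),\qquad J(v):=\int_H|\nabla v|^2\dx+c\int_{R_2}\det\nabla v\dx .
\end{align*}
Both $\varphi|_H$ and $\psi|_H$ vanish on $\partial H\setminus\{x_1=0\}$ and have the same trace on $\{x_1=0\}$. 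So it suffices to prove $J(v)\ge 0$ for all $v$ in the mixed class $V:=\{v\in W^{1,2}(H;\R^2):\ v=0 \text{ on } \partial H\setminus\{x_1=0\}\}$, where $v$ is free on the left edge. This is the mixed Dirichlet--Neumann flavour announced in the abstract. By the change of sign $v\mapsto(v_1,-v_2)$ we may assume $c\ge0$.

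The core step, and the main obstacle, is $J\ge0$ on $V$ for $0\le c\le 4$. The difficulty is that the determinant term lives only on $R_2$, while the Dirichlet energy on the insulating square $R_1$ must be used to compensate it. A naive even extension of $v$ across $\{x_1=0\}$, combined with \cite[Proposition 3.4]{BKV23} for $f=c\chi_{R_2}$, only yields $2\int_H|\nabla v|^2+c\int_{R_2}\det\nabla v\ge0$, which loses a factor of two.

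Instead I would exploit that $\int_{R_2}\det\nabla v\dx$ is a null-Lagrangian boundary term. Since $v=0$ on the other three sides of $R_2$, it equals $\int_{\{x_1=1/2\}} v_1\,\partial_{x_2}v_2\,\md x_2$ up to sign, so it depends only on the trace $g:=v|_{\{x_1=1/2\}}$. I would then minimise the Dirichlet energy separately on $R_2$ (Dirichlet on all sides, trace $g$ on the left) and on $R_1$ (trace $g$ on the right, zero on top and bottom, free on the left). Expanding $g$ in the sine basis $\sin(k\pi(x_2+\tfrac12))$, both minimal energies are explicit diagonal quadratic forms in the Fourier coefficients, with $\coth$ and $\tanh$-type multipliers of the form $k\pi\coth(k\pi/2)$ and $k\pi\tanh(k\pi/2)$. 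The boundary term is an off-diagonal bilinear form in the coefficients of $g_1,g_2$. The inequality $J\ge0$ then reduces to positive semidefiniteness of an explicit infinite matrix, equivalently to a bound on a bilinear form of Hilbert type. I expect this to hold exactly up to $c=4$, with the constant attained only asymptotically at high frequency, where the strip multipliers approach those of a half-plane. If the matrix estimate proves unwieldy, I would first try a Schur-test or Cauchy--Schwarz estimate pairing the $k$-th and $j$-th modes.

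For sharpness, I would argue that for $c>4$ the functional is negative at some $\varphi$. I would take the Mielke--Sprenger-type test functions used in \cite[Proposition 3.4]{BKV23} showing $|M|=4$ is sharp, and concentrate them near a point of $\partial R_2\cap\partial\Omega$, for instance the midpoint of the right edge $\{x_1=1\}$. At small scales the geometry there is a half-plane in which $f=c$ is constant and Dirichlet conditions hold on the boundary line, so the blow-up reproduces their negative-energy construction. Since the support stays inside $R_2$ away from $R_{-2}$, this gives $I<0$.

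Finally, the equality case (only $\varphi=0$ attains $I=0$ when $|c|\le4$) would follow from the same Fourier analysis. The form is strictly positive on each finite mode set, so $J(v)=0$ forces $g=0$, hence $v$ harmonic with zero data on $R_2$, hence $v=0$ there. On $R_1$, $v$ must then be the zero-energy minimiser, so $v=0$ on $R_1$ as well.
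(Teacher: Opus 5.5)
There are two genuine gaps: the core inequality is only conjectured, and the sharpness construction does not work.

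\emph{The core inequality.} Your decomposition $I(\varphi)=J(\varphi|_H)+J(\psi|_H)$ is correct; it is a cleaner justification of the paper's remark that one may take $\varphi$ even in $x_1$. Your Fourier set-up is also correct. Put $t=x_2+\frac12$, $g_1=\sum_k a_k\sin(k\pi t)$ and $g_2=\sum_k b_k\sin(k\pi t)$. The least Dirichlet energy on $H$ with trace $g$ on $\{x_1=\frac12\}$ is $\sum_k \mu_k(a_k^2+b_k^2)$ with $\mu_k=k\pi\coth(k\pi)$, since $\coth x+\tanh x=2\coth 2x$. Moreover $c\int_{R_2}\det\nabla v\dx=-c\int_0^1 g_1g_2'\,\md t=-c\sum_{k+j\ \mathrm{odd}}\frac{2kj}{k^2-j^2}a_kb_j$. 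But the whole content of the theorem is then this matrix inequality, and you only say you expect it to hold.

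The fallback you name cannot work. After normalising by $\sqrt{\mu_k\mu_j}$, the entries behave like $\frac{1}{\pi(k-j)}$ near the diagonal, so the matrix of absolute values is unbounded on $\ell^2$. The form is bounded only through Hilbert-transform-type cancellation. Hence no Schur test and no termwise Cauchy--Schwarz can give any bound, let alone the sharp one needed at $c=4$.

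The missing idea, within your framework, is comparison with a half-plane. Extend $g$ by zero to $\R$ and use the unitary Fourier transform. Then $|\int g_1g_2'|\le\int|\xi||\hat g_1||\hat g_2|\,\md\xi\le\frac12\bigl(\|g_1\|^2_{\dot H^{1/2}}+\|g_2\|^2_{\dot H^{1/2}}\bigr)$, where $\|h\|^2_{\dot H^{1/2}}=\int|\xi||\hat h|^2\,\md\xi$ is the Dirichlet energy of the harmonic extension of $h$ to a half-plane. The half-strip competitor $\sum_k a_k\sin(k\pi t)e^{-k\pi s}$, extended by zero, gives $\|g_1\|^2_{\dot H^{1/2}}\le\frac\pi2\sum_k k a_k^2$. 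Hence $J(v)\ge\sum_k k\pi\bigl(\coth(k\pi)-\frac c4\bigr)(a_k^2+b_k^2)\ge0$ for $0\le c\le 4$. This also supplies the strictness that ``positivity on each finite mode set'' does not.

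Completed this way, yours is a genuinely different proof. The paper avoids Fourier analysis entirely. It folds $R_{-1}$ onto $R_{-2}$ by reflection in $x_1=-\frac12$ and writes the folded map as $u+\psi$, with $u=0$ on $\partial R_{-2}\setminus\{x_1=-\frac12\}$ and $\psi=0$ on $\partial R_{-2}\setminus\{x_1=-1\}$. Completing the square gives $\int_{R_{-2}}|\cof\nabla u-\nabla u-\nabla\psi|^2\dx$, because the cross term $\int_{R_{-2}}\cof\nabla u:\nabla\psi\dx$ vanishes as a null Lagrangian.

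\emph{Sharpness.} Your sharpness argument fails. If $\varphi$ is supported in a small half-disc around $(1,0)$, then $\varphi\in W^{1,2}_0(R_2;\R^2)$. Since $\det$ is a null Lagrangian, $\int_{R_2}\det\nabla\varphi\dx=0$, so $I(\varphi)=\int_\Omega|\nabla\varphi|^2\dx>0$. A Dirichlet edge with $f$ constant nearby can never destabilise.

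Mielke--Sprenger-type constructions need a boundary portion on which the test map is free. Here that role is played by the interface $\{x_1=\pm\frac12\}$ between $f=\pm c$ and the insulating layer. Blowing up at an interior point of that segment gives $\int_{\R^2}|\nabla u|^2+c\chi_{\{x_1>0\}}\det\nabla u\dx$. Its threshold is exactly $4$, because the $f=0$ side costs as much Dirichlet energy as the $f=c$ side. This is the mechanism behind \cite[Proposition 3.4]{BKV23}, which the paper simply cites.

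In your own framework, take $g_1=\eta\cos(Nt)$ and $g_2=\eta\sin(Nt)$, with $\eta\in C_c^\infty(0,1)$ and $N$ large. The half-plane bound is then asymptotically attained, and $J(v)<0$ once $c>4$.
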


\begin{proof}
    Firstly, we may assume that $c\geq 0$, since if this is not the case then we set $\tilde{\varphi}(x_1,x_2)=\varphi(-x_1,x_2)$ and note that 
\begin{align*}
    I\left(\varphi,\INS{-c}{0}{0}{c} \,\right) = I\left(\tilde{\varphi},\INS{c}{0}{0}{-c} \,\right),
\end{align*}
    the point being that it does not matter whether it is $c$ or $-c$ that is attached to the region $R_{-2}$.  Next, we note that it is enough to show the statement for $c=4$.  Indeed, if we assume that $I\left(\varphi,\INS{-4}{0}{0}{4} \,\right) \geq 0$ for all $\varphi$ as above, then by writing
    \begin{align*}
       I\left(\varphi,\INS{-c}{0}{0}{c} \,\right) = \left\{\begin{array}{l l} \int_{\Omega}|\nabla \varphi|^2 \dx + c \Delta \quad &  \quad \mathrm{if}  \ \Delta \geq 0 \\[0.2cm]
       I\left(\varphi,\INS{-4}{0}{0}{4} \,\right) + (c-4)\Delta  \quad & \quad \mathrm{if} \ \Delta < 0,
       \end{array}\right.
    \end{align*}
where $$\Delta: = \int_{R_{2}}\det \nabla \varphi \, \dx - \int_{R_{-2}}\det \nabla \varphi \, \dx,$$
it is clear that $I\left(\varphi,\INS{-c}{0}{0}{c} \,\right) \geq 0$ for the same $\varphi$ and $0 \leq c \leq 4$.  Henceforth, we set $I(\varphi):=I\left(\varphi,\INS{-4}{0}{0}{4} \,\right)$ for brevity.
Moreover, it is easy to see that in order to minimise $I(\varphi)$ we can assume that 
    $\varphi$ is symmetric with respect to the line $x_1=0$, i.e., $\varphi(x_1,x_2)=\varphi(-x_1,x_2)$ for all $x_1\in [0,1]$.
Hence, it suffices to show that     
\begin{align}\label{half-domain}
   E(u)= \int_{R_{-2}} |\nabla u|^2 -4 \det\nabla u\,\dx +\int_{R_{-1}}|\nabla u |^2\,\dx\ge 0 
\end{align}    
for any $u\in W^{1,2}(\om;\R^2)$ such that $u=0$ on  $\partial(R_{-2}\cup R_{-1})\setminus\{x_1=0\}$, see Figure \ref{pic:pureinsulation_reduced}.
\begin{figure}[H]
\centering
\begin{minipage}{0.69\textwidth}
\centering
\begin{tikzpicture}[scale=0.8]
\node (A) at (-4,-2) {}; 
\node[right=2 of A.center] (B) {}; 
\node[right=4 of A.center] (O) {}; 

\node[above=4 of A.center] (A') {};
\node[above=4 of B.center] (B') {};
\node[above=4 of O.center] (O') {};

\filldraw[thick, top color=gray!30, bottom color=gray!30] (A.center) rectangle node{$-4$} (B'.center);

\node at ($(B)!0.5!(O')$) {0};

\node[below right=0.1 of A'.center] {$R_{-2}$};
\node[below right=0.1 of B'.center] {$R_{-1}$};

\draw[thick] (B.center) -- (O.center);
\draw[thick] (B'.center) -- (O'.center);

\draw[thick, dotted] (O.center) -- (O'.center);

\end{tikzpicture}
\end{minipage}
\caption{Part of the domain $\Omega$ divided into rectangles \( R_{-2} \) and \( R_{-1} \); boundary conditions of \(u\): solid lines = zero value, dotted line = free boundary.}
\label{pic:pureinsulation_reduced}
\end{figure}
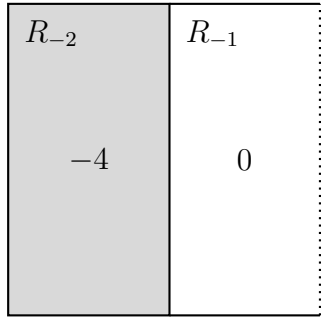

Condition \eqref{half-domain} is equivalent to 
$$
J(u,\psi):= \int_{R_{-2}} |\nabla u|^2 -4 \det\nabla u +|\nabla u+\nabla\psi|^2\,\dx \ge 0 $$

for all $u$ as above and all $\psi\in W^{1,2}(\om;\R^2)$ such that  $\psi=0$ on  $\partial(R_{-2}\cup R_{-1})\setminus\{x_1=-1\}$.

Given $u$ and $\psi $ we construct $\varphi$ as follows:

$$
\varphi(x):=
\begin{cases}
    u(x)  & \text{ if } x\in R_{-2},\\
 u(-x_1-1,x_2)+\psi(-x_1-1,x_2)    & \text{ if } x\in R_{-1},\\
 \varphi(-x_1,x_2) & \text{ if } x\in R_{1}\cup R_2,\end{cases}
$$
from which it follows that $E(\varphi)= J(u,\psi)$.  Keeping in mind that $|\cof A|=|A|$  and $2\det A=\cof A:A$ for any $A\in\R^{2\times 2}$,
we calculate
\begin{align*}
J(u,\psi)&= \int_{R_{-2}} |\cof\nabla u|^2 -4 \det\nabla u +|\nabla u|^2+|\nabla\psi|^2 + 2\nabla u:\nabla\psi\,\dx\\
&=\int_{R_{-2}} |\cof\nabla u-\nabla u|^2 +|\nabla\psi|^2 + 2\nabla u:\nabla\psi\,\dx\\
&= \int_{R_{-2}}|(\cof\nabla u-\nabla u)-\nabla\psi|^2+2(\cof\nabla u-\nabla u):\nabla \psi+2\nabla u:\nabla\psi\,\dx\\
&=\int_{R_{-2}}|\cof\nabla u-\nabla u-\nabla\psi|^2\,\dx\ge 0,
\end{align*}
where we have applied Lemma~\ref{Jon'slemma} below.  Finally, if $c>4$ (or, equivalently, $c<-4$), we can apply \cite[Proposition 3.4]{BKV23} in order to find $\varphi_0 \in W^{1,2}(\om,\R^2)$ such that $I(\varphi_0)<0$, so that the upper bound of $4$ in $|c|\leq 4$ is sharp.
\end{proof}

\begin{lemma}\label{Jon'slemma}
 It holds that $$\int_{R_{-2}}\cof\nabla u:\nabla\psi\,\dx=\int_{R_{-2}}\cof\nabla\psi :\nabla u\dx =0$$   for all $u$ and $\psi$ as in the proof of Theorem~\ref{Thm:insulation}.
\end{lemma}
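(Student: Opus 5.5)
The first equality is purely algebraic, and the vanishing I would get from a boundary-term argument using the null Lagrangian structure of the determinant. First I will record the pointwise identity $\cof A: B=\cof B: A$ for $A,B\in\R^{2\times 2}$. It holds because $\cof A:B=a_{11}b_{22}+a_{22}b_{11}-a_{12}b_{21}-a_{21}b_{12}$, which is symmetric in $A,B$; equivalently it is the polarization of $2\det A=\cof A:A$. This gives the first equality in Lemma~\ref{Jon'slemma}, and it remains to show $\int_{R_{-2}}\cof\nabla u:\nabla\psi\,\dx=0$.

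Next I will pin down the boundary conditions on $\partial R_{-2}$ that are used in the construction of $\varphi$.
\begin{itemize}
\item $u$ vanishes on the left side $\{x_1=-1\}$ and on the top and bottom sides of $R_{-2}$.
\item Continuity of $\varphi$ across $\{x_1=-\tfrac12\}$ forces $\psi(-\tfrac12,\cdot)=0$ (the reflection $x_1\mapsto -x_1-1$ fixes this line). So $\psi$ vanishes on the right side and on the top and bottom sides, and is free only on $\{x_1=-1\}$.
\end{itemize}
Thus on every side of $\partial R_{-2}$ at least one of $u,\psi$ vanishes.

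For smooth $u,\psi$ I will expand
$$\cof\nabla u:\nabla\psi=\sum_{i}\bigl(\partial_1 u_i\,\partial_2 \psi_{\sigma(i)}-\partial_2 u_i\,\partial_1\psi_{\sigma(i)}\bigr)\cdot(\pm1),$$
with $\sigma$ swapping indices. Each bracket has the form $\partial_1 a\,\partial_2 b-\partial_2 a\,\partial_1 b$, and I will rewrite it as a divergence in whichever of two ways is convenient:
$$\partial_1(a\,\partial_2 b)-\partial_2(a\,\partial_1 b)\quad\text{or}\quad \partial_2(b\,\partial_1 a)-\partial_1(b\,\partial_2 a).$$
By the divergence theorem, each term becomes $\int_{\partial R_{-2}} a\,\partial_\tau b\,\md s$, up to sign, where $\partial_\tau$ is the tangential derivative. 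Equivalently, integrating by parts the other way, it becomes $\int_{\partial R_{-2}} b\,\partial_\tau a\,\md s$.

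On each side either $a=0$, which kills $a\,\partial_\tau b$, or $b=0$ there, so that its tangential derivative along that side is zero as well. Hence every boundary contribution vanishes side by side. Corners have measure zero on $\partial R_{-2}$ and cause no trouble for smooth functions.

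The main obstacle is passing from smooth functions to $W^{1,2}$. The map $(u,\psi)\mapsto\int_{R_{-2}}\cof\nabla u:\nabla\psi\,\dx$ is continuous on $W^{1,2}\times W^{1,2}$, so it suffices to approximate $u$ and $\psi$ in $W^{1,2}(R_{-2})$ by functions smooth up to the boundary with the same vanishing properties.
\begin{itemize}
\item For $u$, I would reflect oddly across the three sides where it vanishes, or equivalently shift inward and mollify, and cut off away from the right side.
\item For $\psi$, I would do the same with the roles of the sides exchanged.
\end{itemize}
Alternatively, one can avoid the corners entirely by extending $u$ by zero to the left and $\psi$ by zero to the right, since both extensions remain $W^{1,2}$ on a slightly enlarged region. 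One then integrates against a cutoff and uses the weak Piola identity $\operatorname{div}\cof\nabla u=0$ in the distributional sense, tested with the compactly supported product. Either route yields the claim.
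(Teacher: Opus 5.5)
Your proposal is correct and follows essentially the paper's route. The paper uses the Piola identity to reduce the integral to $\int_{\partial R_{-2}}(\mathrm{cof}\,\nabla u)\nu\cdot\psi\,\mathrm{d}S$ and kills it face by face: $\psi=0$ on three faces, and on $\{x_1=-1\}$ the vector $(\mathrm{cof}\,\nabla u)\nu$ involves only tangential derivatives of $u$, which vanish there. That is your bracket-by-bracket divergence computation with the tangential-derivative boundary term, and your density argument supplies the passage from smooth functions to $W^{1,2}$ that the paper leaves implicit.
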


\begin{proof}
    We get in view of the Piola identity ${\rm div}\,\,\cof\nabla u= {\rm div}\,\,\cof\nabla \psi=0$ that 
    $$
    \int_{R_{-2}}\cof\nabla u:\nabla\psi\,\dx=\int_{\partial R_{-2}} (\cof\nabla u)\nu\cdot\psi\,{\rm d}S=\int_{\partial R_{-2}} (\cof\nabla \psi)\nu\cdot u\,{\rm d}S , $$
    where $\nu$ is the unit outer normal to the boundary of $R_{-2}$.
    Note that $\psi=0$ on three faces of $R_{-2}$, and, on the face $x_1=-1$,  $u=0$, which implies that $(\cof\nabla u)\nu$  is zero along the boundary as well. 
    The statement follows. 
    \end{proof}

\begin{corollary}
If $J(u,\psi)=0$ if and only if $u=\psi=0$ in $R_{-2}$.
\end{corollary}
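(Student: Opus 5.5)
The reverse implication is immediate: if $u=\psi=0$ in $R_{-2}$ then every term in $J(u,\psi)$ vanishes. For the forward implication I will start from the identity obtained at the end of the proof of Theorem~\ref{Thm:insulation}. It uses Lemma~\ref{Jon'slemma} and reads
\begin{align*}
J(u,\psi)=\int_{R_{-2}}|\cof\nabla u-\nabla u-\nabla\psi|^2\,\dx .
\end{align*}
So $J(u,\psi)=0$ forces $\nabla\psi=\cof\nabla u-\nabla u$ a.e. in $R_{-2}$.

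The next step is to write this out in components. Put $u=(u^1,u^2)$ and $\psi=(\psi^1,\psi^2)$. Recall that $\cof\nabla u$ has rows $(u^2_{,2},-u^2_{,1})$ and $(-u^1_{,2},u^1_{,1})$. The pointwise identity then says
\begin{align*}
\psi^1 = w^1 := u^2\circ\sigma\text{-type combination},
\end{align*}
but more usefully it gives
\begin{align*}
\nabla(\psi^1+u^1)=(u^2_{,2},-u^2_{,1}),\qquad \nabla(\psi^2+u^2)=(-u^1_{,2},u^1_{,1}).
\end{align*}
Set $v:=u+\psi$. Then $v^1$ and $u^2$ are a harmonic-conjugate pair, and so are $v^2$ and $-u^1$. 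In particular, by Weyl's lemma all four functions are harmonic in $R_{-2}$.

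Now bring in the boundary conditions. On the three faces $x_2=\pm\frac12$ and $x_1=-\frac12$ both $u$ and $\psi$ vanish, so $v=0$ and $u=0$ there. On the face $x_1=-1$ we have $u=0$. Take the pair $(v^1,u^2)$, which satisfies $\nabla v^1=(u^2_{,2},-u^2_{,1})$. On the three faces where $v^1=0$, the tangential derivative of $v^1$ vanishes. By the conjugacy relation, the normal derivative of $u^2$ also vanishes there, and $u^2=0$ there too. So $u^2$ has zero Cauchy data on a nontrivial boundary portion. Alternatively, on the face $x_1=-1$, $u^2=0$ gives $u^2_{,2}=0$, hence $v^1_{,1}=0$ there. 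Then $v^1$ is harmonic with $v^1=0$ on three sides and zero Neumann data on the fourth. Multiplying by $v^1$ and integrating gives $\int|\nabla v^1|^2=0$, so $v^1=0$. Hence $\nabla u^2=0$, and with its boundary values $u^2=0$. The same argument for $(v^2,-u^1)$ gives $v^2=0$ and $u^1=0$. Therefore $u=0$ and $\psi=v-u=0$.

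The main obstacle is making the boundary manipulations rigorous for $W^{1,2}$ functions. In particular, the "Neumann condition" $v^1_{,1}=0$ on $x_1=-1$ must be read in a weak sense. I would avoid traces of derivatives altogether by arguing directly with the integral identity. From the conjugacy, $\int_{R_{-2}}|\nabla v^1|^2\,\dx=\int_{R_{-2}}\nabla v^1\cdot(u^2_{,2},-u^2_{,1})\,\dx$. This is a null-Lagrangian pairing of the form $\int\det\nabla(v^1,u^2)$, which vanishes by the same Piola-identity argument as in Lemma~\ref{Jon'slemma}: $v^1=0$ on three faces and $u^2=0$ on the remaining one. This gives $\nabla v^1=0$ cleanly, and the second pair is handled identically.
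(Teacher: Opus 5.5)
Your argument fails at the boundary conditions. For the pairs $(u,\psi)$ in $J$, $u$ vanishes on the faces $x_2=\pm\frac12$ and $x_1=-1$ of $R_{-2}$. By contrast, $\psi$ vanishes on $x_2=\pm\frac12$ and $x_1=-\frac12$. The latter condition is what makes the glued map $\varphi$ continuous across $x_1=-\frac12$, and it is what Lemma~\ref{Jon'slemma} uses. The face $x_1=-\frac12$ is exactly where $u$ is free. Indeed, if $u$ vanished on all of $\partial R_{-2}$, then $\int_{R_{-2}}\det\nabla u\,\dx=0$ and Theorem~\ref{Thm:insulation} would be trivial.

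So $v=u+\psi$ vanishes only on the two horizontal faces. On $x_1=-\frac12$, neither $v^1$ (which equals $u^1$ there) nor $u^2$ need vanish. Your Neumann/energy argument and its null-Lagrangian version therefore both fail. The Piola argument only disposes of $\int_{R_{-2}}\det\nabla(\psi^1,u^2)\,\dx$ and $\int_{R_{-2}}\det\nabla(u^1,\psi^2)\,\dx$. What is left is
\begin{align*}
\int_{R_{-2}}|\nabla v^1|^2\,\dx=\int_{R_{-2}}\det\nabla u\,\dx=\int_{R_{-2}}|\nabla v^2|^2\,\dx ,
\end{align*}
which is not zero a priori; it is precisely the quantity the insulation problem is about.

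The gap can be closed in two ways. The first is to add the two identities and use $|\nabla v|=|\cof\nabla u|=|\nabla u|$. This gives $\int_{R_{-2}}|\nabla u|^2-2\det\nabla u\,\dx=0$, which is \eqref{conformal}. From there you still need Hadamard's pointwise inequality to get conformality of $\nabla u$, then $\nabla\psi=0$, then reflection plus the identity theorem for $u$. This is the paper's proof.

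The second is to complete your first idea (zero Cauchy data), using only a horizontal face, where both $u$ and $\psi$ really vanish. Your conjugacy relations say that $F:=v^1+iu^2$ and $G:=v^2-iu^1$ are holomorphic $W^{1,2}$ functions on $R_{-2}$ with zero trace on $x_2=\frac12$. Gluing $F$ with $z\mapsto\overline{F(\bar z+i)}$ gives a $W^{1,2}$, weakly holomorphic, hence holomorphic extension that vanishes on a segment. So $F\equiv0$, and likewise $G\equiv0$, whence $u=0$ and $\psi=v-u=0$.

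That second route would be a genuinely different and slightly shorter proof than the paper's, since it needs neither Hadamard's inequality nor a second use of Lemma~\ref{Jon'slemma}. As written, however, you abandoned it for the argument that relies on the nonexistent condition $u=0$ on $x_1=-\frac12$.
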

\begin{proof}
The "if" implication is trivial. We focus on the opposite one.
By Theorem 1, $J(u,\psi)=0$ only if 
\begin{align}\label{Martin'sidentity}\cof \, \nabla u - \nabla u = \nabla \psi \quad \mathrm{a.e. \ in} \ R_{-2}.
\end{align}
Taking the inner product of this expression with $\cof \, \nabla u$ and integrating over $R_{-2}$ gives 
\begin{align}\label{conformal}\int_{R_{-2}}|\nabla u|^2 - 2 \det \nabla u \, \dx = 0,
\end{align}
where we have again made use of Lemma \ref{Jon'slemma}. By Hadamard's pointwise inequality for matrices in $\R^{2 \times 2}$, \eqref{conformal} implies that for a.e.\@ $x$ in $R_{-2}$ $\cof \nabla u(x) = \nabla u(x)$, i.e. $\nabla u(x)$ is conformal.  Using this and \eqref{Martin'sidentity}, it follows that $\nabla \psi=0$ a.e.\@ in $R_{-2}$, and hence, by the boundary conditions, $\psi=0$.  Moreover, since $\nabla u$ is conformal then the function $f: z=x+iy \mapsto u_1(x,y)+iu_2(x,y)$ is holomorphic in $R_{-2}$.  Let $a$ be the midpoint of the line joining $-1-i/2$ and $-1/2-i/2$, and note that the function
\begin{align*}
    \widetilde{f}(z)& :=\left\{ \begin{array}{l l } f(z) & \mathrm{if} \ \im z \geq -1/2  \\
    \overline{f(\bar{z}-i)} & \mathrm{if} \ \im z \leq -1/2 
    \end{array}\right.
\end{align*}
is, by applying the boundary condition $f(z)=0$ for $z \in R_{-2}$ such that $\im z = -1/2$ together with the Schwarz reflection principle, holomorphic in a sufficiently small disk $D(a,r)$ about the point $a$.  The set $\{z\in D(a,r): \ \widetilde{f}(z)=0\}$ clearly contains an accumulation point, so by standard results it holds that $\widetilde{f}(z) = 0$ for $z$ in $D(a,r)$, and in particular that $f(z)=0$ for  $z$ in $D(a,r) \cap R_{-2}$.  It now follows that $f=0$ in $R_{-2}$, so $u=0$ there.

\end{proof}

\begin{remark}\label{BC}
    It follows from the proof of Lemma~\ref{Jon'slemma} that the statement of the lemma holds true if 
    on the boundary $u=0$ or $\psi=0$. Hence, $J(u,\psi)\ge 0$ in much more general situations. For instance,  we need only to assume that $\psi=0$ on the part of the boundary where $x_1=-1/2$.
\end{remark}
    
\subsection{Uniqueness of minimizers for polyconvex integrands}
    Let us put $R=R_{-2}\cup R_{-1}$. Consequently,  it follows for $E$ given in \eqref{half-domain} that  $E(u)\ge 0$ if $u\in W^{1,2}(R;\R^2)$  such that $u=0$  on $\partial R_2\setminus\{x:\, x_1=-1/2\}$.
Let us define for $u\in W^{1,2}(R;\R^2)$ and $M\in\R$
\begin{align}\label{EM}
    E_M(u)= \int_{R_{-2}} |\nabla u|^2 -M \det\nabla u\,\dx +\int_{R_{-1}}|\nabla u |^2\,\dx \end{align}

\begin{proposition}\label{first-order}
    Let $0\le M<4$ then 
    $$E_M(u)\ge \frac{4-M}{4}\int_{R} |\nabla u|^2\,\md x$$ for every $u\in W^{1,2}(R;\R^2)$  such that $u=0$  on $\partial R_2\setminus\{x:\, x_1=-1/2\}$, i.e., $E_M$ is mean coercive. 
    \end{proposition}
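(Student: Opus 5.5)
The plan is to write $E_M$ as a convex combination of $E_4$ and the Dirichlet energy, and then use the nonnegativity of $E_4$ established in the proof of Theorem~\ref{Thm:insulation}. Since $E_M$ is linear in $M$, setting $t:=M/4\in[0,1)$ gives
\begin{align*}
E_M(u) = t\,E_4(u) + (1-t)\int_{R}|\nabla u|^2\,\dx ,
\end{align*}
where $E_4=E$ is the functional in \eqref{half-domain}. To verify this, observe that the determinant term has coefficient $-4t=-M$. The Dirichlet terms over $R_{-2}$ and $R_{-1}$ appear with total weight $t+(1-t)=1$.

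It then suffices to show $E_4(u)\ge 0$ for every $u$ in the admissible class, i.e.\ for $u\in W^{1,2}(R;\R^2)$ vanishing on $\partial R\setminus\{x_1=0\}$, where $R=R_{-2}\cup R_{-1}$. This is exactly \eqref{half-domain}, which the proof of Theorem~\ref{Thm:insulation} reduces to the completed-square identity
\begin{align*}
J(u,\psi)=\int_{R_{-2}}|\cof\nabla u-\nabla u-\nabla\psi|^2\,\dx\ge 0 .
\end{align*}
I would briefly re-check that this argument applies to the class in the statement. The reflection construction identifies the restriction of $u$ to $R_{-1}$ with $u+\psi$ reflected into $R_{-2}$. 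Under this identification, the free side $\{x_1=0\}$ of $R_{-1}$ maps to the side $\{x_1=-1\}$ of $R_{-2}$, and there $u=0$. Consequently $\psi$ vanishes on the remaining three sides of $R_{-2}$, which is precisely the situation covered by Lemma~\ref{Jon'slemma}. Remark~\ref{BC} confirms that these boundary conditions suffice.

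Combining the two steps gives $E_M(u)\ge (1-t)\int_R|\nabla u|^2\,\dx=\frac{4-M}{4}\int_R|\nabla u|^2\,\dx$.

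The main obstacle, if any, is the bookkeeping of the boundary conditions. The statement writes "$\partial R_2\setminus\{x_1=-1/2\}$", which I read as the free boundary side of $R$ in the reduced problem of Figure~\ref{pic:pureinsulation_reduced}, namely $\{x_1=0\}$. I would make sure that the version of $E\ge0$ being invoked is the one for this class. If the intended free side is instead the interface $\{x_1=-1/2\}$, I would appeal to Remark~\ref{BC}: nonnegativity of $J$ only requires that one of $u$ or $\psi$ vanish on each side of $R_{-2}$, and the same convex-combination argument then goes through unchanged.
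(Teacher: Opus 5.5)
Your proof is correct and matches the paper's argument. The paper rearranges the same linear identity as $E_4=\frac4M E_M-(\frac4M-1)\int_R|\nabla u|^2\,\dx$ and uses $E_4\ge 0$ via Remark~\ref{BC}; your convex-combination form avoids dividing by $M$, so $M=0$ needs no separate comment. The paper's appeal to Remark~\ref{BC} suggests the intended class is the larger one, $u=0$ only on $\partial R_{-2}\setminus\{x_1=-1/2\}$ with $u$ free on the outer sides of $R_{-1}$. That is your fallback case, and it is handled correctly: $\psi:=u(-x_1-1,x_2)-u$ vanishes on the interface, and $u$ vanishes on the other three sides of $R_{-2}$.
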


    \begin{proof}
        In view of Remark~\ref{BC} and the fact that $E_4=E$ given in \eqref{half-domain} we have that $E_4\ge 0$. 
        Finally, 
        $$E_4(u)=\frac4M E_M(u)-(4/M-1)\int_{R} |\nabla u|^2\,\md x\ge 0 $$
        and the result follows.
    \end{proof}

Consider $u_0\in W^{1,2}(R;\R^2)$ and define 
$$\mathcal{U}_{u_0} =\{u\in W^{1,2}(R;\R^2):\,  \textrm{ such that } u=u_0  \textrm{ on } \partial R_2\setminus\{x:\, x_1=-1/2\} \}. $$

If $\psi\in \mathcal{U}_{u_0}$ and $u\in \mathcal{U}_0$ then 
\begin{align}\label{minimizer}
E_M(\psi+ u)=E_M(\psi)+\langle E'(\psi),u\rangle +E_M(u)\ .
\end{align}

The next proposition shows that solutions of the Euler-Lagrange equations for $E_M$  with mixed boundary conditions are minimizers of $E_M$ despite the fact that the integrand is not convex.  This is an analogous result to \cite{neff}, see also \cite{john,spector,spector2} for uniqueness results in nonlinear elasticity. 

\begin{proposition}
    Let $0<M\le 4$ and let  $\psi\in\mathcal{U}_{u_0}$ be such that  $\langle E'(\psi),u\rangle=0$. Then $\psi$ is a minimizer 
    of $E_M$ on $\mathcal{U}_{u_0}$  which is unique for $M<4$.
\end{proposition}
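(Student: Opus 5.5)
The plan is to exploit the exact quadratic splitting \eqref{minimizer}. Since $E_M$ is a quadratic form in $\nabla u$ (the determinant is quadratic in two dimensions), for $\psi\in\mathcal{U}_{u_0}$ and $u\in\mathcal{U}_0$ we have
\begin{align*}
E_M(\psi+u)=E_M(\psi)+\langle E_M'(\psi),u\rangle+E_M(u),
\end{align*}
where
\begin{align*}
\langle E_M'(\psi),u\rangle=\int_{R_{-2}}2\nabla\psi:\nabla u-M\cof\nabla\psi:\nabla u\,\dx+\int_{R_{-1}}2\nabla\psi:\nabla u\,\dx .
\end{align*}
This uses the identity $\det(A+B)=\det A+\cof A:B+\det B$ for $A,B\in\R^{2\times2}$. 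Any $v\in\mathcal{U}_{u_0}$ can be written as $v=\psi+u$ with $u:=v-\psi\in\mathcal{U}_0$, because the boundary condition is affine. The hypothesis $\langle E'(\psi),u\rangle=0$ for all $u\in\mathcal{U}_0$ then yields
\begin{align*}
E_M(v)=E_M(\psi)+E_M(u).
\end{align*}

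For minimality it therefore suffices to show $E_M(u)\ge 0$ for every $u\in\mathcal{U}_0$ and $0<M\le4$. For $M=4$, $E_4=E$ from \eqref{half-domain}. Its nonnegativity on $\mathcal{U}_0$ is exactly what the proof of Theorem~\ref{Thm:insulation} gives, via the reflection construction and Lemma~\ref{Jon'slemma}. By Remark~\ref{BC}, the lemma only needs the vanishing of $u$ or $\psi$ on the relevant faces, so it applies to the mixed conditions defining $\mathcal{U}_0$. For $0<M<4$ we invoke Proposition~\ref{first-order}, which gives
\begin{align*}
E_M(u)\ge\frac{4-M}{4}\int_R|\nabla u|^2\,\dx\ge0 .
\end{align*}
Hence $E_M(v)\ge E_M(\psi)$, and $\psi$ is a minimizer.

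For uniqueness when $M<4$, suppose $v\in\mathcal{U}_{u_0}$ is another minimizer. Then $E_M(v)=E_M(\psi)$, so $E_M(u)=0$ with $u=v-\psi$. Mean coercivity gives $\nabla u=0$ a.e.\ in $R$. Since $R$ is connected and $u=0$ on a boundary portion of positive length, a Poincaré-type argument (or simply "$u$ is constant and has zero trace there") gives $u=0$, i.e.\ $v=\psi$.

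I expect the only delicate point to be bookkeeping: verifying that the class $\mathcal{U}_0$ matches precisely the admissible class in \eqref{half-domain} and Proposition~\ref{first-order}. This includes the free face $x_1=0$ and the typo-like reference to $\partial R_2$, which should read $\partial R$. It also requires checking that the boundary terms in Lemma~\ref{Jon'slemma} vanish under these mixed conditions, which is what Remark~\ref{BC} provides. The rest is algebraic.
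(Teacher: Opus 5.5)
Your proof is correct and is essentially the paper's one-line argument written out: the exact splitting \eqref{minimizer} with vanishing first variation, $E_4\ge 0$ on $\mathcal{U}_0$ for the case $M=4$, and the mean coercivity of Proposition~\ref{first-order} (together with the zero trace on part of $\partial R$) for minimality and uniqueness when $0<M<4$. One small correction to your bookkeeping remark: the misprint $\partial R_2$ most plausibly stands for $\partial R_{-2}$ (compare the later condition $u_\varepsilon=0$ on $\partial R_{-2}\setminus\{x_1=-1/2\}$), so $\mathcal{U}_0$ is larger than the class in \eqref{half-domain} rather than equal to it; since you only use $E_4\ge 0$ and Proposition~\ref{first-order} on $\mathcal{U}_0$ itself, which is exactly what Remark~\ref{BC} provides, the argument is unaffected.
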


\begin{proof}
It follows from \eqref{minimizer} and from mean coercivity of $E_M$ for $0\le M<4$.
\end{proof}

\subsection{Varying the width of the insulation layer}\label{thinsulate}

In this section we consider the effect of varying the width of the so-called insulation layer, which in Section \ref{insuproper} corresponded to the region $R_{-1} \cup R_{1}$ and which was of unit width. The intuition is that the thinner the insulation region, the smaller the range of $c$ for which the corresponding excess function is nonnnegative. 

Evidence for this is supplied by Theorem \ref{Th:delta} and Proposition \ref{asymptotic} below, a consequence of which is the result that if $N \in\N$ and the canonical domain is replaced with
$$\Omega_{N}:=\left(-\frac{1}{2}-\frac{1}{2N},\frac{1}{2}+\frac{1}{2N}\right) \times \left(-\frac{1}{2}, \frac{1}{2}\right),$$
consisting of the regions 
\begin{align*}R_{-2,N} & =\left(-\frac{1}{2}-\frac{1}{2N},-\frac{1}{2N}\right) \times \left(-\frac{1}{2},\frac{1}{2}\right) \ \mathrm{and} \\
R_{2,N} & =\left(\frac{1}{2N},\frac{1}{2}+\frac{1}{2N}\right) \times \left(-\frac{1}{2},\frac{1}{2}\right)\end{align*} which are separated by a `thin' insulation layer $\left(-\frac{1}{2N},\frac{1}{2N}\right) \times \left(-\frac{1}{2},\frac{1}{2}\right)$, and if 
$$f(x):= c \chi_{R_{2,N}} - c \chi_{R_{-2,N}},$$
then for all $\varphi \in W_0^{1,2}(\om_N,\R^2)$ we have
\begin{align}\label{inequalityN} \int_{\om_N} |\nabla u|^2 + f(x) \det \nabla u \, \dx \geq 0 \quad \mathrm{if}  |c| \lesssim 2 + \frac{2}{N}.\end{align} 
As the insulation region shrinks, we see that the upper bound in the sufficient condition $|c| \lesssim 2 + \frac{2}{N}$ approaches 2.  The extent to which this is necessary is investigated numerically in Section \ref{theorem5numerics} and, at least for reasonable values of $N$, there is evidence that the upper bound $2 + \frac{2}{N}$ is far from necessary other than in the case $N=1$, but with a similar (decreasing) trend.   

Using the symmetry of the domain $\om_N$, we can argue as we did for the canonical insulation problem that in order to prove \eqref{inequalityN} it is sufficient to show 
$$\int_{R_{-2,N}}|\nabla u|^2 - c \det \nabla u \, \dx + \int_{(-1/2N,0)\times (-1/2,1/2)} |\nabla u|^2 \geq 0$$
for all $u$ in $W^{1,2}(R_{-2,N}\cup (-1/2N,0)\times (-1/2,1/2);\R^2)$ such that $u=0$ on the boundary of that region other than along $\{0\} \times (-1/2,1/2)$.  Then, by a translation of $-\frac{1}{2}$ along the $x_1$ axis, we further argue that it is sufficient to prove 
$$\int_{R_{-2}}|\nabla u|^2 - c \det \nabla u \, \dx + \int_{R^\delta} |\nabla u|^2 \, \dx \geq 0$$
for all $u$ in $W^{1,2}(R_{-2} \cup R^\delta;\R^2)$ such that $u=0$ on $\partial(R_{-2}\cup R^{\delta})\setminus \{x:\, x_1=-1/2+\delta\}$. Here, $R^\delta=(-1/2,-1/2+\delta)\times(-1/2,1/2)$ and $R_{-2}$ is defined in Figure \ref{pic:pureinsulation}.

To that end, it follows from \cite{BKV23,mielke-sprenger} that $\forall\varepsilon>0$  there exists $u_\varepsilon\in W^{1,2}(R_{-2};\R^2)$, $u_\varepsilon=0$ on $\partial R_{-2}\setminus \{x_1=-1/2\}$ such that 
\begin{align}\label{ms}
    \int_{R_{-2}}|\nabla u_\varepsilon|^2 \,\md x < (2+\varepsilon)\int_{R_{-2}}\det\nabla u_\varepsilon\,\md x\ .
\end{align}
Let us denote the set of such maps $\mathcal{A}_\varepsilon$, i.e.,
$$\mathcal{A}_\varepsilon=\{u_\varepsilon\in W^{1,2}(R_{-2};\R^2):\, u_\varepsilon=0 \text{ on }\partial R_{-2}\setminus \{x_1=-1/2\}, \text{ and  \eqref{ms} holds }\}\ .$$
Hence, given $2<M<4$ we get for $\varepsilon>0 $ small enough
 \begin{align}\label{lower-upper bound}
   \frac{2-M}{2}\int_{R_{-2}}|\nabla u_\varepsilon|^2 \,\md x&\le \int_{R_{-2}}|\nabla u_\varepsilon|^2 \,\md x -M\int_{R_{-2}}\det\nabla u_\varepsilon\,\md x\nonumber\\\
&    <\int_{R_{-2}}|\nabla u_\varepsilon|^2 \,\md x -\frac{M}{2+\varepsilon} \int_{R_{-2}}|\nabla u_\varepsilon|^2 \,\md x\nonumber\\\
   &= \frac{2+\varepsilon-M}{2+\varepsilon} \int_{R_{-2}}|\nabla u_\varepsilon|^2 \,\md x\nonumber\\
   &<\frac{2+\varepsilon-M}{2} \int_{R_{-2}}|\nabla u_\varepsilon|^2 \,\md x.
\end{align} 

Let us extend $u_\varepsilon$ to the domain $$R^\delta=(-1/2,-1/2+\delta)\times(-1/2,1/2), \quad \text{where } \delta=(M-2)/4 $$ such that this extension has zero Dirichlet boundary conditions on the set $\{x_2=\pm 1/2\}$, see Figure  \ref{pic:pureinsulation_reduced_general}.

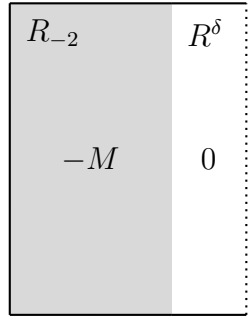
\begin{figure}[H]
\centering
\begin{minipage}{0.69\textwidth}
\centering
\begin{tikzpicture}[scale=0.8]
\node (A) at (-4,-2) {}; 
\node[right=2 of A.center] (B) {}; 
\node[right=3 of A.center] (O) {}; 

\node[above=4 of A.center] (A') {};
\node[above=4 of B.center] (B') {};
\node[above=4 of O.center] (O') {};

\fill[thick, top color=gray!30, bottom color=gray!30] (A.center) rectangle node{$-M$} (B'.center);
\draw[thick] (A.center) -- (A'.center);

\draw[thick] (A.center) -- (B.center);

\draw[thick] (A'.center) -- (B'.center);

\node at ($(B)!0.5!(O')$) {0};

\node[below right=0.1 of A'.center] {$R_{-2}$};
\node[below right=0.1 of B'.center] {$R^{\delta}$};

\draw[thick] (B.center) -- (O.center);
\draw[thick] (B'.center) -- (O'.center);

\draw[thick, dotted] (O.center) -- (O'.center);

\end{tikzpicture}
\end{minipage}
\caption{Part of the domain $\Omega$ divided into rectangles \( R_{-2} \) and \( R^{\delta} \); 
boundary conditions for \(u\): solid lines indicate zero value, the dotted line indicates a free boundary. 
Note that the horizontal lengths of \( R_{-2} \) and \( R^{\delta} \) are \(1/2\) and 
\(\delta = (M-2)/4\), respectively.}
\label{pic:pureinsulation_reduced_general}
\end{figure}

\begin{lemma}\label{lowerbound}
 If $2<M\le 3$, $2/(M-2)\in \mathbb{N}$ and $\delta=(M-2)/4$
 then 
\begin{align}\label{special}
  \int_{R^\delta}|\nabla u_\varepsilon|^2 \,\md x  \ge \frac{M-2-\varepsilon}{2} \int_{R_{-2}}|\nabla u_\varepsilon|^2 \,\md x\ .
\end{align} 
 
for every $u_\varepsilon\in\mathcal{A}_\varepsilon$ defined above and 
every $0<\varepsilon<M-2$.
    
\end{lemma}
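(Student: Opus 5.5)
The plan is a Fourier-mode comparison on the common interface $\Gamma:=\{-1/2\}\times(-1/2,1/2)$. The trace $g:=u_\varepsilon|_\Gamma$ vanishes at $x_2=\pm1/2$, so we expand
$$g(x_2)=\sum_{k\ge1} g_k \sin\bigl(k\pi(x_2+\tfrac12)\bigr),\qquad g_k\in\R^2 .$$
We bound $\int_{R^\delta}|\nabla u_\varepsilon|^2$ from below and $\int_{R_{-2}}\det\nabla u_\varepsilon$ from above, both mode by mode in terms of the $g_k$. Then \eqref{ms} ties $\int_{R_{-2}}\det\nabla u_\varepsilon$ to $\int_{R_{-2}}|\nabla u_\varepsilon|^2$. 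As described below, this route does not seem to need the hypothesis $2/(M-2)\in\N$; only $\delta=(M-2)/4\le 1/4$ is used.

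\emph{Step 1 (lower bound on $R^\delta$).} Among all $w\in W^{1,2}(R^\delta;\R^2)$ with $w=g$ on $\Gamma$, $w=0$ on $\{x_2=\pm1/2\}$ and free on $\{x_1=-1/2+\delta\}$, the Dirichlet energy is minimised by the harmonic function with zero Neumann data on the free side. Separating variables gives modes $\cosh(k\pi(x_1+1/2-\delta))/\cosh(k\pi\delta)$, and hence
$$\int_{R^\delta}|\nabla u_\varepsilon|^2\dx\ \ge\ \frac12\sum_{k}k\pi\tanh(k\pi\delta)\,|g_k|^2 .$$

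\emph{Step 2 (upper bound on the determinant in $R_{-2}$).} Let $v$ be the harmonic function in $R_{-2}$ with the same boundary values as $u_\varepsilon$: namely $g$ on $\Gamma$ and $0$ on the other three sides. Since $\det$ is a null Lagrangian, $\int_{R_{-2}}\det\nabla u_\varepsilon=\int_{R_{-2}}\det\nabla v$. Pointwise Hadamard then gives $\int_{R_{-2}}\det\nabla v\le\frac12\int_{R_{-2}}|\nabla v|^2$. Computing the latter by separation of variables (zero Dirichlet data on the far side $x_1=-1$, width $1/2$) yields
$$\int_{R_{-2}}\det\nabla u_\varepsilon\dx\ \le\ \frac14\sum_k k\pi\coth(k\pi/2)\,|g_k|^2 .$$

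\emph{Step 3 (mode-wise comparison).} The ratio $\tanh(k\pi\delta)\tanh(k\pi/2)$ is increasing in $k$, so the smallest case is $k=1$. There we claim $\tanh(\pi\delta)\tanh(\pi/2)\ge 2\delta$ for $0<\delta\le1/4$. This follows because the left side is concave in $\delta$, has slope $\pi\tanh(\pi/2)\approx2.88>2$ at $0$, and at $\delta=1/4$ equals $\approx0.60>0.5$. Consequently $k\pi\tanh(k\pi\delta)\ge 2\delta\,k\pi\coth(k\pi/2)$ for every $k$. Combining Steps 1 and 2 gives
$$\int_{R^\delta}|\nabla u_\varepsilon|^2\ \ge\ 4\delta\int_{R_{-2}}\det\nabla u_\varepsilon=(M-2)\int_{R_{-2}}\det\nabla u_\varepsilon .$$

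\emph{Step 4 (closing).} By \eqref{ms}, $\int_{R_{-2}}\det\nabla u_\varepsilon>\frac1{2+\varepsilon}\int_{R_{-2}}|\nabla u_\varepsilon|^2$, and hence
$$\int_{R^\delta}|\nabla u_\varepsilon|^2\ \ge\ \frac{M-2}{2+\varepsilon}\int_{R_{-2}}|\nabla u_\varepsilon|^2 .$$
It remains to check $\frac{M-2}{2+\varepsilon}\ge\frac{M-2-\varepsilon}{2}$. This is equivalent to $(M-2)\varepsilon\le\varepsilon(2+\varepsilon)$, which holds since $M-2\le1$. This is exactly \eqref{special}.

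The main obstacle is making Steps 1–2 rigorous with the correct normalisations. One must confirm that the Dirichlet-to-Neumann weights $\tfrac12 k\pi\tanh$ and $\tfrac12 k\pi\coth$ carry the same constant in front of $|g_k|^2$, and that the vector-valued $g$ is treated componentwise. The elementary inequality in Step 3 is the other point to check carefully. If this spectral route fails, the fallback is to use $2/(M-2)\in\N$: split $R_{-2}$ into $N=1/(2\delta)$ strips of width $\delta$, compare $\int_{R^\delta}|\nabla u_\varepsilon|^2$ with each strip via the reflection argument of Theorem~\ref{Thm:insulation} and Remark~\ref{BC}, and average over the strips.
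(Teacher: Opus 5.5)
Your proof is correct, and it takes a genuinely different route from the paper. The paper argues by contradiction. Assuming \eqref{special} fails, it reflects $u_\varepsilon|_{R^\delta}$ across vertical lines $2/(M-2)$ times to fill $R_{-1}$. It then combines the resulting energy bound with \eqref{ms} used at the constant $4$, which gives a negative value of the functional in \eqref{half-domain} and contradicts Theorem~\ref{Thm:insulation}. The hypothesis $2/(M-2)\in\N$ is what lets the reflected copies tile $R_{-1}$, and $M\le 3$ is what makes $\varepsilon(2M-6-\varepsilon)<0$.

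You instead work directly with the interface trace, and each step checks out:
\begin{itemize}
\item The mode-by-mode one-dimensional minimisations give $\frac12\sum_k k\pi\tanh(k\pi\delta)|g_k|^2$ as a lower bound for the energy on $R^\delta$.
\item The null-Lagrangian property plus pointwise Hadamard on the harmonic replacement give $\frac14\sum_k k\pi\coth(k\pi/2)|g_k|^2$ as an upper bound for $\int_{R_{-2}}\det\nabla u_\varepsilon\dx$.
\item The normalisations agree, since both come from $\int_0^1\sin^2=\int_0^1\cos^2=\frac12$.
\item The concavity argument for $\tanh(\pi\delta)\tanh(\pi/2)\ge 2\delta$ on $(0,1/4]$ is sound.
\end{itemize}

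Your route is self-contained and needs neither $2/(M-2)\in\N$ nor $\varepsilon<M-2$. Its intermediate inequality $\int_{R^\delta}|\nabla u|^2\dx\ge (M-2)\int_{R_{-2}}\det\nabla u\dx$ holds for \emph{every} admissible $u$, not only for $u\in\mathcal{A}_\varepsilon$. Adding $\int_{R_{-2}}|\nabla u|^2-M\det\nabla u\dx$ to both sides and using pointwise Hadamard once more, you get the inequality of Theorem~\ref{Th:delta} \emph{without} the prefactor. That is exactly what Remark~\ref{expansion_delta} conjectures, for all admissible pairs with $k\ge 2$ (the case $k=1$ is Theorem~\ref{Thm:insulation}). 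In fact the same comparison works for every $\delta>0$ and every $M\le 2+2\tanh(\pi/2)\tanh(\pi\delta)$.

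What the paper's route buys is sharpness at the canonical width. Your pointwise Hadamard step on the harmonic replacement is lossy, so at $\delta=1/2$ your comparison only reaches $M=2+2\tanh^2(\pi/2)\approx 3.68$. The reflection identity behind Theorem~\ref{Thm:insulation} reaches the sharp value $4$. The fallback sketched in your last paragraph is not needed.
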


\begin{proof}
    
Assume that 
\begin{align}
  \int_{R^\delta}|\nabla u_\varepsilon|^2 \,\md x  < \frac{M-2-\varepsilon}{2} \int_{R_{-2}}|\nabla u_\varepsilon|^2 \,\md x\ .
\end{align}
This implies that 
\begin{align}
   \int_{R_{-2}}|\nabla u_\varepsilon|^2 \,\md x -M\int_{R_{-2}}\det\nabla u_\varepsilon\,\md x+ \int_{R^\delta}|\nabla u_\varepsilon|^2 \,\md x<0\ .
\end{align}

Let us further assume that $2/(M-2)\in\mathbb{N}$.
 We extend $u_\varepsilon$ from $R^\delta$ to $R_{-1}$ by 2/(M-2)  flips along the vertical direction   so that 
 \begin{align}\label{upper-bound}
     \int_{R_{-1}}|\nabla u_\varepsilon|^2 \,\md x=\frac{2}{M-2} \int_{R^\delta}|\nabla u_\varepsilon|^2 \,\md x<
     \frac{(M-2-\varepsilon)}{(M-2)}\int_{R_{-2}}|\nabla u_\varepsilon|^2 \,\md x
 \end{align}

 We have 
 \begin{align}\label{M=4}
    \int_{R_{-2}}|\nabla u_\varepsilon|^2 -4\int_{R_{-2}}\det\nabla u_\varepsilon\,\md x <  \int_{R_{-2}}|\nabla u_\varepsilon|^2 -\frac{4}{2+\varepsilon} \int_{R_{-2}}|\nabla u_\varepsilon|^2\, \md x=\frac{\varepsilon-2}{\varepsilon+2}\int_{R_{-2}}|\nabla u_\varepsilon|^2\, \md x\ .
 \end{align}

 Putting together \eqref{M=4} and \eqref{upper-bound}
we get for $2<M\le 3$
\begin{align}
  \int_{R_{-2}}|\nabla u_\varepsilon|^2 -4\int_{R_{-2}}\det\nabla u_\varepsilon\,\md x+ \int_{R_{-1}}|\nabla u_\varepsilon|^2 \,\md x< \frac{\varepsilon\,(2M-6-\varepsilon)}{(\varepsilon+2)(M-2)}\int_{R_{-2}}|\nabla u_\varepsilon|^2\, \md x <0\ ,
\end{align}
 which contradicts Theorem~\ref{Thm:insulation} whenever $2<M\le 3$. 

 \end{proof}
Hence, adding \eqref{upper-bound} to both sides of \eqref{lower-upper bound} yields 
\begin{align*}
   \frac{-\varepsilon}{2}\int_{R_{-2}}|\nabla u_\varepsilon|^2 \,\md x&\le \int_{R_{-2}}|\nabla u_\varepsilon|^2 \,\md x -M\int_{R_{-2}}\det\nabla u_\varepsilon\,\md x+\int_{R^\delta}|\nabla u_\varepsilon|^2 \,\md x \\
\end{align*} 

and therefore 
\begin{align}\label{epsilon}
   0\le  \int_{R_{-2}}\frac{2+\varepsilon}{2}|\nabla u_\varepsilon|^2 \,\md x -M\int_{R_{-2}}\det\nabla u_\varepsilon\,\md x+\int_{R^\delta}|\nabla u_\varepsilon|^2 \,\md x 
\end{align} 
whenever $u_\varepsilon\in\mathcal{A}_\varepsilon.$

\begin{theorem} \label{Th:delta}
    Let $2\le M\le 3$ such that $2/(M-2)\in\mathbb{N}$ for $M>2$. Let further $\delta=(M-2)/4$  Then 
    \begin{align}\label{thm2-ineq}
   \frac{\int_{R_{-2}}|\nabla u|^2 \,\md x}{2\int_{R_{-2}}\det\nabla u\,\md x} \int_{R_{-2}}|\nabla u|^2 \,\md x-M\int_{R_{-2}}\det\nabla u\,\md x+ \int_{R^\delta}|\nabla u|^2 \,\md x\ge 0\ 
\end{align}
for every $u\in W^{1,2}(R_{-2}\cup R^\delta;\R^2)$, $u=0$ on $\partial ( R_{-2}\cup R^\delta)\setminus \{x:\, x_1=-1/2+\delta\}$ such that $\int_{R_{-2}}\det\nabla u\,\md x>0$. 
\end{theorem}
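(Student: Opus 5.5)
The plan is to reduce the theorem to inequality \eqref{epsilon}, which has already been established for all $u_\varepsilon\in\mathcal{A}_\varepsilon$ via Lemma~\ref{lowerbound}. We do this by choosing $\varepsilon$ in terms of $u$ itself.

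Fix $u$ as in the statement with $\int_{R_{-2}}\det\nabla u\,\md x>0$. Define
\[
\varepsilon_u:=\frac{\int_{R_{-2}}|\nabla u|^2\,\md x}{\int_{R_{-2}}\det\nabla u\,\md x}-2 .
\]
By the pointwise Hadamard inequality $|A|^2\ge 2\det A$, we have $\varepsilon_u\ge 0$. The coefficient in \eqref{thm2-ineq} is then exactly $\frac{2+\varepsilon_u}{2}$. Indeed, $\frac{\int|\nabla u|^2}{2\int\det\nabla u}=\frac{2+\varepsilon_u}{2}$.

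\textbf{Case 1.} Suppose $\varepsilon_u<\varepsilon$ for some admissible $\varepsilon\in(0,M-2)$. Then $u$ satisfies \eqref{ms} with that $\varepsilon$, and its restriction to $R_{-2}$ has the boundary values required for $\mathcal{A}_\varepsilon$. Hence $u\in\mathcal{A}_\varepsilon$, with the extension to $R^\delta$ being $u$ itself. We need \eqref{epsilon} with coefficient $(2+\varepsilon_u)/2$ rather than $(2+\varepsilon)/2$. To get it, we rerun the argument of Lemma~\ref{lowerbound} and \eqref{lower-upper bound} using the exact identity $\int_{R_{-2}}\det\nabla u=\frac{1}{2+\varepsilon_u}\int_{R_{-2}}|\nabla u|^2$ in place of the strict inequality \eqref{ms}.

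Concretely, suppose \eqref{thm2-ineq} failed. Then
\[
\int_{R^\delta}|\nabla u|^2 < \Big(\tfrac{M}{2+\varepsilon_u}-\tfrac{1}{2}\Big)\int_{R_{-2}}|\nabla u|^2 .
\]
Extend $u$ from $R^\delta$ to $R_{-1}$ by the $2/(M-2)=1/(2\delta)$ reflections, so that $\int_{R_{-1}}|\nabla u|^2=\frac{1}{2\delta}\int_{R^\delta}|\nabla u|^2$. We then evaluate $E_4(u)=\int_{R_{-2}}|\nabla u|^2-4\int_{R_{-2}}\det\nabla u+\int_{R_{-1}}|\nabla u|^2$. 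Setting $t=2+\varepsilon_u\ge2$ and $m=M-2\in(0,1]$, and dividing by $\int_{R_{-2}}|\nabla u|^2$, this is bounded above by
\[
1-\frac{4}{t}+\frac{2}{m}\Big(\frac{2+m}{t}-\frac12\Big)=\frac{(t-2)(m-2)}{mt}\le 0 ,
\]
with strict inequality inherited from the assumed failure. This contradicts Theorem~\ref{Thm:insulation}, which gives $E_4\ge0$. Note that the contradiction does not even require $\varepsilon_u$ to be small; only $t\ge2$ and $m\le 2$ are used.

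\textbf{Case 2.} The computation above in fact covers every $u$, so no case split is needed. When $M=2$ (so $\delta=0$), \eqref{thm2-ineq} reads $\frac{2+\varepsilon_u}{2}\int|\nabla u|^2-2\int\det\nabla u\ge0$. This follows directly from $\int|\nabla u|^2\ge 2\int\det\nabla u$.

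The main obstacle is justifying the reflection step. The extension by $2/(M-2)$ flips in the $x_1$-direction across vertical lines must map $R^\delta$ onto the width-$1/2$ region $R_{-1}$. It must also produce a $W^{1,2}$ function whose values at $x_1=-1/2+\delta\cdot(2/(M-2))=0$ are free, and whose values on $x_2=\pm1/2$ vanish. This is exactly the setting of Remark~\ref{BC}, since $E_4\ge0$ holds with the right edge free. Integrality of $2/(M-2)$ is what makes the reflections tile $R_{-1}$ exactly, and continuity across the reflection lines is automatic for even reflections. With that verified, the algebraic sign check $(t-2)(m-2)\le0$ completes the proof.
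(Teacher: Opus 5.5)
Your route differs from the paper's and, after one correction, is valid; as written, however, the key computation is wrong. Write $A=\int_{R_{-2}}|\nabla u|^2\,\md x$, $D=\int_{R_{-2}}\det\nabla u\,\md x=A/t$ and $B=\int_{R^\delta}|\nabla u|^2\,\md x$.

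\textbf{The slip.} If \eqref{thm2-ineq} fails, you get $B<\bigl(\frac Mt-\frac t2\bigr)A$, not $B<\bigl(\frac Mt-\frac12\bigr)A$. Your bound does follow, since $\frac t2\ge\frac12$, but it is too weak, and your displayed identity is false. In fact $1-\frac4t+\frac2m\bigl(\frac{2+m}{t}-\frac12\bigr)=\frac{(t-2)(m-1)+2}{mt}$, which equals $\frac1m>0$ at $t=2$. So no contradiction with Theorem~\ref{Thm:insulation} follows from what you wrote.

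The repair is to use $\frac t2\ge1$ and write $B<\bigl(\frac Mt-1\bigr)A$. Then $1-\frac4t+\frac2m\bigl(\frac{2+m}{t}-1\bigr)=\frac{(t-2)(m-2)}{mt}\le0$, exactly as you claim. Keeping $\frac t2$ instead gives $-\frac{(t-2)(t+2-m)}{mt}\le0$, which also works. The reflection step is fine: the free edge at $x_1=0$ is covered by the form \eqref{half-domain} of Theorem~\ref{Thm:insulation}.

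\textbf{Comparison with the paper.} The paper argues indirectly. It first proves Lemma~\ref{lowerbound}, a lower bound on $B$ for the near-extremal maps in $\mathcal A_\varepsilon$. That contradiction argument carries the slack term $\varepsilon(2M-6-\varepsilon)$, which is where $M\le3$ enters. The paper then adds this bound to the Hadamard estimate \eqref{lower-upper bound} to get \eqref{epsilon}, and finally lets $\varepsilon\downarrow\varepsilon_u$. The prefactor in \eqref{thm2-ineq} is an artefact of this route.

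You compare with Theorem~\ref{Thm:insulation} in a single step using the exact ratio $t$. So you need neither $\mathcal A_\varepsilon$ (your Case~1 is superfluous, as you note) nor $M\le3$. Your corrected computation gives more: with $\frac Mt-1$ it is precisely the negation of $A-MD+B\ge0$. You therefore prove \eqref{thm2-ineq} without the prefactor, which is the conjecture of Remark~\ref{expansion_delta}, for every admissible $u$ and without assuming $D>0$. Most transparently, with $m=M-2=4\delta\le2$,
\[
A-MD+B=\tfrac m2\Bigl(A-4D+\tfrac2m B\Bigr)+\bigl(1-\tfrac m2\bigr)(A-2D).
\]
This is a convex combination of Theorem~\ref{Thm:insulation} applied to the reflected map (whose energy on $R_{-1}$ is $\frac2mB$) and Hadamard's inequality. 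It also improves Proposition~\ref{asymptotic}, since \eqref{marmite} then holds with $2+\frac2k$ in place of $M_k<2+\frac2k$.
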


\begin{proof} If $M=2$ then $\delta=0$ and the statement follows from the pointwise Hadamard inequality.
    Assume that $M>2$ and that the statement is not true, i.e., that there is $u$ admissible such that  \eqref{thm2-ineq} does not hold. 
    Then inevitably 
\begin{align}
   \int_{R_{-2}}|\nabla u|^2 \,\md x -M\int_{R_{-2}}\det\nabla u\,\md x<0\ .
   \end{align}
   This means that there is $2+\varepsilon<M$ such that 
\begin{align}
   \frac{M}{2+\varepsilon}\int_{R_{-2}}|\nabla u|^2 \,\md x -M\int_{R_{-2}}\det\nabla u\,\md x<0\ .
   \end{align}   
 However, then $u\in\mathcal{A}_\varepsilon$ where $2+\varepsilon > \int_{R_{-2}}|\nabla u|^2 \,\md x /\int_{R_{-2}}\det\nabla u\,\md x$. Consequently, \eqref{epsilon} holds for this $u$ and $\varepsilon$ instead of $u_\varepsilon$. 
 Taking the infimum over $\varepsilon$ gives the result.
    
\end{proof}

\begin{remark}\label{expansion_delta}
We believe that the factor $\int_{R_{-2}}|\nabla u|^2 \,\md x /(2\int_{R_{-2}}\det\nabla u\,\md x)\ge 1$ in Theorem~\ref{Th:delta} can be omitted, and the numerical tests below confirm it.  The admissible pairs of Theorem~\ref{Th:delta} are exactly
\[
(\delta, M)=\Bigl(\tfrac{1}{2k},\,2+\tfrac{2}{k}\Bigr), \qquad k\in\mathbb{N}.
\]
For $k=1$, one obtains the setup of Theorem~\ref{Thm:insulation}. 
\end{remark}
In the spirit of Theorem \ref{Th:delta}, we have the following result.  
\begin{proposition}\label{asymptotic}
Let $k \in \N$ and set
\begin{align}\label{mk}M_k:=1-\frac{1}{k}+\left(\frac{1}{k^2}+\frac{6}{k}+1\right)^{\frac{1}{2}}\end{align}
and set $\delta:=\frac{1}{2k}$.  Then for every $u\in W^{1,2}(R_{-2}\cup R^\delta;\R^2)$ such that $u=0$ on $\partial ( R_{-2}\cup R^\delta)\setminus \{x_1=-1/2+\delta\}$, it holds that 
\begin{align}\label{marmite}
\int_{R_{-2}}|\nabla u|^2 - M_k \det \nabla u \dx + \int_{R^{\delta}}|\nabla u|^2 \,\dx \geq 0.
\end{align}
\end{proposition}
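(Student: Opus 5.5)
The plan is to reduce \eqref{marmite} to the half-domain inequality \eqref{half-domain} (i.e.\ $E\ge 0$, established in the proof of Theorem~\ref{Thm:insulation}) by reflecting $u$ from the thin strip $R^\delta$ so that it fills $R_{-1}$. We then combine the result with the pointwise Hadamard inequality, and finally compare the constant obtained with $M_k$. Throughout, fix an admissible $u$ and write
\[
D:=\int_{R_{-2}}|\nabla u|^2\dx,\qquad d:=\int_{R_{-2}}\det\nabla u\dx,\qquad D_\delta:=\int_{R^\delta}|\nabla u|^2\dx .
\]

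\emph{Step 1 (reflection).} Since $\delta=\frac1{2k}$, the strip $R_{-1}=(-\tfrac12,0)\times(-\tfrac12,\tfrac12)$ is the union of $k$ translates of $R^\delta$. Define $\tilde u$ on $R_{-2}\cup R_{-1}$ by $\tilde u=u$ on $R_{-2}\cup R^\delta$. On the remaining $k-1$ strips, define $\tilde u$ by successive even reflections in the vertical lines $x_1=-\tfrac12+j\delta$, $j=1,\dots,k-1$.
\begin{itemize}
\item Even reflection across a vertical line preserves $W^{1,2}$, because traces match on the common interface, and it preserves the Dirichlet energy on each strip.
\item It also preserves the zero condition on $\{x_2=\pm\tfrac12\}$.
\item Hence $\tilde u\in W^{1,2}$, $\tilde u=0$ on $\partial(R_{-2}\cup R_{-1})\setminus\{x_1=0\}$, and the line $x_1=0$ is left free.
\end{itemize}
This is exactly the admissible class in \eqref{half-domain}, and $\int_{R_{-1}}|\nabla\tilde u|^2\dx=k\,D_\delta$. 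Therefore $E(\tilde u)\ge 0$ gives
\[
D-4d+k\,D_\delta\ \ge\ 0,\qquad\text{i.e.}\qquad D_\delta\ge \frac{4d-D}{k}.
\]

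\emph{Step 2 (combination with Hadamard).} If $d\le 0$, then \eqref{marmite} is trivial since $M_k>0$. Assume $d>0$ and set $t:=D/d$. The pointwise Hadamard inequality gives $t\ge 2$. Using Step 1,
\[
D-M d+D_\delta\ \ge\ d\Bigl(t\bigl(1-\tfrac1k\bigr)+\tfrac4k-M\Bigr)\ \ge\ d\Bigl(2+\tfrac2k-M\Bigr),
\]
where the last inequality uses $t\ge2$ and $1-\frac1k\ge0$. So the left-hand side of \eqref{marmite} is nonnegative for every $M\le 2+\frac2k$.

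\emph{Step 3 (comparison of constants).} It remains to check that $M_k\le 2+\frac2k$. This is equivalent to $\bigl(\frac1{k^2}+\frac6k+1\bigr)^{1/2}\le 1+\frac3k$. Squaring, this becomes $\frac1{k^2}\le\frac9{k^2}$, which holds. Since the left side of \eqref{marmite} is decreasing in $M$ when $d>0$, the inequality \eqref{marmite} follows.

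The step requiring the most care is Step 1. One has to check that the reflected map really lies in the admissible class of \eqref{half-domain}. In particular, its only free edge must be $\{x_1=0\}$, and no spurious Dirichlet condition may be imposed at the interfaces $x_1=-\tfrac12+j\delta$. If this goes through as expected, the argument in fact yields the slightly stronger constant $2+\frac2k$, consistent with Remark~\ref{expansion_delta}.
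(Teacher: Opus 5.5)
Your argument is correct. It uses the same ingredients as the paper: the $k-1$ reflections of $u|_{R^\delta}$ filling $R_{-1}$, the inequality $E(\tilde u)\ge 0$ from \eqref{half-domain}, and Hadamard's inequality. You combine them differently, and the result is sharper.

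The paper argues by contradiction. Assuming \eqref{marmite} fails, it applies Hadamard's inequality twice to get the upper bound $D_\delta\le\frac{M_k-2}{2}D$ and the lower bound $d>D/M_k$. It substitutes both into $E(\tilde u)$ and obtains $E(\tilde u)\le\bigl(1-\frac{4}{M_k}+\frac{k(M_k-2)}{2}\bigr)D=0$. It then appeals to the equality case $\tilde u=0$. The constant $M_k$ is precisely the root of that quadratic. Because the two substituted estimates are not simultaneously sharp, $M_k$ falls slightly below $2+\frac2k$.

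You instead use $E(\tilde u)\ge 0$ directly as a lower bound for $D_\delta$. Your Steps 1--2 amount to the exact identity
\[
D-\Bigl(2+\tfrac{2}{k}\Bigr)d+D_\delta=\tfrac{1}{k}\,E(\tilde u)+\Bigl(1-\tfrac{1}{k}\Bigr)(D-2d),
\]
in which both terms on the right are nonnegative.

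This route is more direct: there is no contradiction argument and no use of the uniqueness statement. It also gives more. It proves \eqref{marmite} with $M_k$ replaced by $2+\frac{2}{k}$, for every $k\in\N$. That is Theorem~\ref{Th:delta} without the factor $\int_{R_{-2}}|\nabla u|^2\dx/\bigl(2\int_{R_{-2}}\det\nabla u\dx\bigr)$ and without the restriction $M\le 3$, which is exactly what Remark~\ref{expansion_delta} conjectures. The paper's route only recovers $2+\frac2k$ asymptotically, via $M_k\sim 2+\frac{2}{k}-\frac{1}{2k^2}$.
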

\begin{proof}
    Assume for a contradiction that \eqref{marmite} is false for one of the $u$ in the class described.  Then $u$ is not zero and
    \begin{align}
   \label{firstline}    \int_{R^{\delta}} |\nabla u|^2 \, \dx & < \int_{R_{-2}} M_k \det \nabla u - |\nabla u|^2 \, \dx \\ \nonumber
       & \leq \int_{R_{-2}} M_k \det \nabla u - 2 |\det \nabla u| \, \dx \\ \nonumber
       & \leq  (M_k-2)\int_{R_{-2}} |\det \nabla u| \, \dx \\
\label{marmalade}     \int_{R^{\delta}} |\nabla u|^2 \, \dx  & \leq   \frac{M_k-2}{2}\int_{R_{-2}} |\nabla u|^2 \, \dx,
    \end{align}
where we have used Hadamard's pointwise inequality $|\nabla u|^2 \geq 2 |\det \nabla u|$ twice.  Now extend $u$ to a function $\tilde{u}$ in $W^{1,2}(R_{-2} \cup R_{-1})$ by a sequence of $k-1$ reflections of $u\arrowvert_{_{R^{\delta}}}$ in vertical lines that cross the $x_1-$axis at points $-1/2+n \delta$ for $n=1,\ldots,k-1$, and note that by \eqref{marmalade} and because $\tilde{u}=u$ on $R_{-2}$, we have
\begin{align}\label{jam}\int_{R_{-1}}|\nabla \tilde{u}|^2 \, \dx < \frac{k(M_k-2)}{2}\int_{R_{-2}} |\nabla \tilde{u}|^2 \, \dx. 
\end{align}

Note also that by rearranging \eqref{firstline} it follows that 
\begin{align}
    \label{lemoncurd} \int_{R_{-2}}\det \nabla \tilde{u} \, \dx > \frac{1}{M_k} \int_{R_{-2}} |\nabla \tilde{u}|^2 \, \dx. 
\end{align}

Now consider the quantity
$$I(\tilde{u})=\int_{R_2}|\nabla \tilde{u}|^2 - 4\det \nabla \tilde{u} \, \dx + \int_{R_{-1}} |\nabla \tilde{u}|^2 \, \dx,$$ 
and note that by \eqref{jam} and \eqref{lemoncurd}
\begin{align*}
  I(\tilde{u}) & \leq \int_{R_{-2}} |\nabla \tilde{u}|^2 -\frac{4}{M_k}|\nabla \tilde{u}|^2 + \frac{k(M_k-2)}{2} |\nabla \tilde{u}|^2 \, \dx \\
  & = \left( 1 -\frac{4}{M_k} + \frac{k(M_k-2)}{2} \right)\int_{R_{-2}}|\nabla \tilde{u}|^2 \, \dx. 
\end{align*}
But $M_k$ given by \eqref{mk} is such that 
$$  1 -\frac{4}{M_k} + \frac{k(M_k-2)}{2} = 0,$$
which implies $I(\tilde{u})=0$.  By Theorem~\ref{Thm:insulation}, this is possible only if $\tilde{u}$ is zero, which is contrary to our initial assumption. 
\end{proof}

We remark that for large $k$ the constants $M_k$ given by \eqref{mk} obey
$$M_k \sim 2 + \frac{2}{k} - \frac{1}{2k^2} + o\left(\frac{1}{k^2}\right),$$
so that we recover, albeit asymptotically, the result of Theorem \ref{Th:delta}, and without the need for additional hypotheses. 

\section{Numerical experiments}\label{numerics}

The functional \eqref{I_four_square} reads
\begin{equation}\label{I_four_square_fem}
I(\varphi)
=
\int_{\Omega}
\nabla \varphi : \nabla \varphi
+
\frac12\, f \, \nabla \varphi : \operatorname{cof}\nabla \varphi
\,\mathrm{d}x,
\end{equation}
where \( : \) denotes the Frobenius inner product on \( \mathbb{R}^{2\times2} \).
Using the identity \( A:\operatorname{cof}A = 2\det A \), the second term can be interpreted as a determinant contribution.

Let \( V_h \subset W^{1,2}(\Omega;\mathbb{R}^2) \) be a finite element space of continuous, piecewise affine vector fields over a triangular mesh aligned with the jump set of \(f\); see Figure~\ref{mesh}.

\begin{figure}[htbp]
    \centering
    \includegraphics[width=0.99\textwidth]{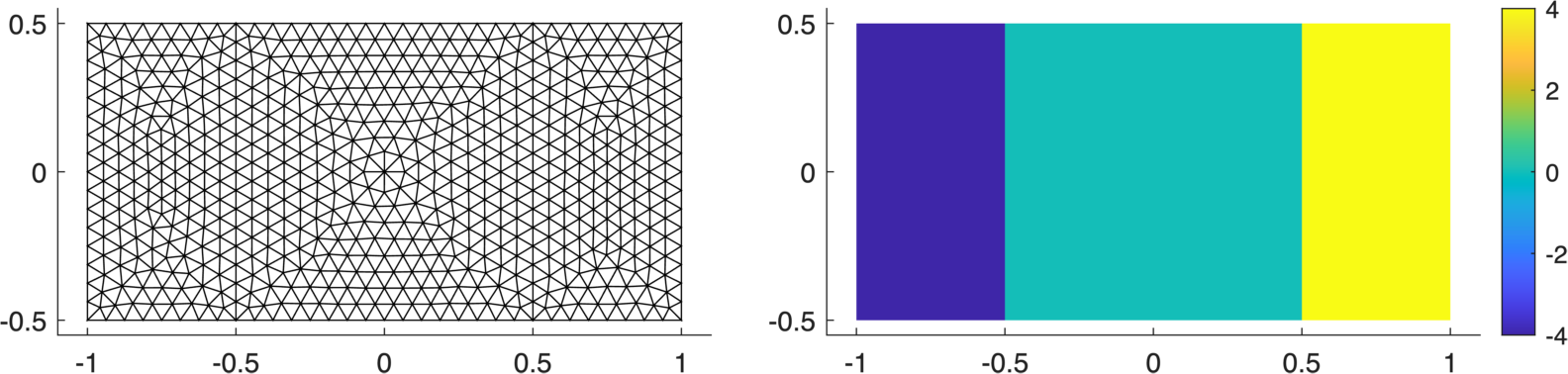}
    \caption{Example of a level 3 triangular mesh of \( \Omega \) (left) and the distribution of \( f \) (right).}
    \label{mesh}
\end{figure}

\noindent
The approximation \( \varphi_h \in V_h \) is represented as
\[
\varphi_h = \sum_{i=1}^N v_i \phi_i,
\qquad
\mathbf v=(v_1,\dots,v_N)^\top\in\mathbb{R}^N,
\]
where \( \{\phi_i\}_{i=1}^N \) is the standard finite element basis of globally continuous, piecewise affine vector-valued functions associated with the mesh nodes. These basis functions satisfy the nodal interpolation property \( \phi_i(x_j)=\delta_{ij} \), so that the coefficient vector \( \mathbf v \) represents the nodal values of the discrete deformation:
\[
\varphi_h(x_i) = v_i.
\]

Since the basis functions are affine on each triangle, their gradients are piecewise constant, and thus
\[
\nabla \varphi_h = \sum_{i=1}^N v_i \nabla \phi_i.
\]

Substituting this expansion into \eqref{I_four_square_fem} and exploiting bilinearity, we define the stiffness matrices (cf.~\cite{BKV25})
\[
(K_1)_{ij}
=
\int_{\Omega} \nabla \phi_j : \nabla \phi_i\,\mathrm{d}x,
\qquad
(K_2)_{ij}
=
\int_{\Omega}
f\, \nabla \phi_j : \operatorname{cof}\nabla \phi_i\,\mathrm{d}x,
\]
which leads to the quadratic representation
\begin{equation}\label{quadratic_form}
I(\varphi_h)
=
\mathbf v^{\top}
\bigl(K_1+\tfrac12 K_2\bigr)\,
\mathbf v
=:
\mathbf v^{\top}
A \,
\mathbf v.
\end{equation}

\noindent
The matrix \( A \) is symmetric. The positivity of its minimal eigenvalue \( \lambda_{\min}(A) \) guarantees that the quadratic form \eqref{quadratic_form} is positive definite and hence confirms Theorem~\ref{Thm:insulation}, while \( \lambda_{\min}(A) < 0 \) indicates a violation.

Equivalently, positive definiteness of \( A \) can be verified by the existence of a Cholesky factorization \( A = LL^\top \). In the computations, failure of the Cholesky decomposition provides a practical indicator of the loss of positive definiteness.

\subsubsection{Numerical verification  of Theorem \ref{Thm:insulation}}
The matrix \( A \) is assembled for a sequence of uniformly refined triangulations of the domain \( \Omega \) defined in~\eqref{domain_insulation}. The right-hand side function \( f \) is given by~\eqref{f_four_square}, with parameters \( c = 4 \) or \( c = 4.1 \).

For each refinement level, the minimal eigenvalue \( \lambda_{\min}(A) \) is computed. The resulting values are summarized in Table~\ref{tab:min_eigenvalue_levels}.

\begin{table}[H]
\centering
\begin{tabular}{
    S[table-format=1.0]
    @{\hspace{18pt}}
    S[table-format=5.0]
    @{\hspace{18pt}}
    S[table-format=5.0]
    @{\hspace{22pt}}
    S[scientific-notation = true, table-format=1.3e1]
    @{\hspace{24pt}}
    S[scientific-notation = true, table-format=1.3e1]
}
\hline
{Level} &
{Triangles} &
{Nodes} &
{\(\lambda_{\min}\) ($c=4$)} &
{\(\lambda_{\min}\) ($c=4.1$)} \\
\hline
1 & 76     & 51     & 6.024e0   &  1.050e0  \\
2 & 294    & 172    & 2.019e-1  &  1.885e-1 \\
3 & 1198   & 648    & 3.095e-2  &  2.267e-2 \\
4 & 4678   & 2436   & 4.360e-3  & -2.722e-4 \\
5 & 18870  & 9628   & 5.310e-4  & -2.650e-3 \\
6 & 75532  & 38151  & 5.864e-5  & -3.037e-3 \\
\hline
\end{tabular}
\caption{Minimal eigenvalues \( \lambda_{\min}(A) \) across refinement levels.}
\label{tab:min_eigenvalue_levels}
\end{table}

From the results, we observe that for \( c = 4 \), all eigenvalues \( \lambda_{\min}(A) \) remain positive and monotonically decrease towards zero as the mesh is refined. This behavior is in full agreement with Theorem~\ref{Thm:insulation}.

In contrast, for \( c = 4.1 \), the minimal eigenvalues become negative starting from refinement level 4, and their magnitude increases with further refinement. This clearly violates the conditions of Theorem~\ref{Thm:insulation}. This behavior indicates a loss of coercivity of the discrete operator once \( c \) exceeds a critical threshold.

To illustrate the structure of the corresponding eigenmode, the eigenvector \( \varphi_{\min} \) associated with \( \lambda_{\min}(A) \) is shown in Figure~\ref{fig:min_eigenvector}.

\begin{figure}[H]
    \centering
    \vspace{0.3cm}
    \includegraphics[width=\textwidth]{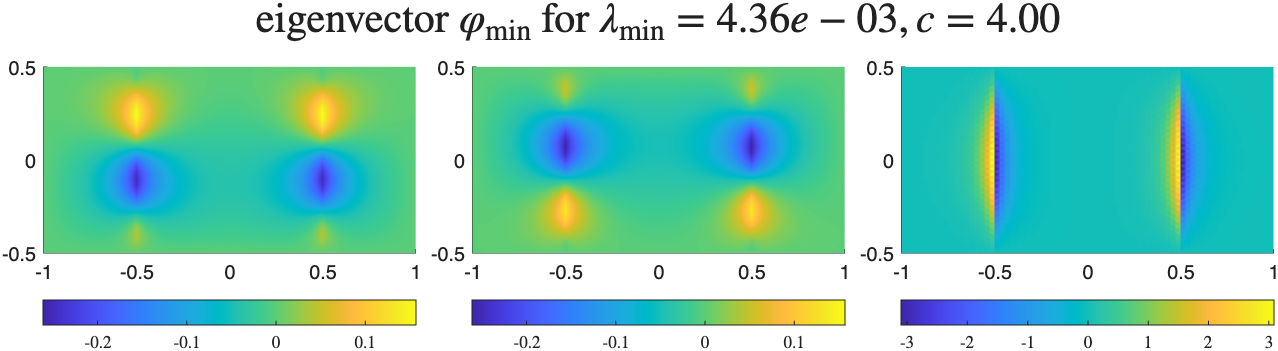}
    \caption{Components of \( \varphi_{\min} \) (first two columns) and \( \det(\nabla \varphi_{\min}) \) (third column) for \( c = 4 \) on the level 4 mesh with 4678 triangles.}
    \label{fig:min_eigenvector}
\end{figure}

\subsubsection{Numerical verification of Theorem \ref{Th:delta}} \label{theorem5numerics}

Admissible pairs \( (\delta, M) \) from Theorem~\ref{Th:delta} are listed in Remark~\ref{expansion_delta}. 

In the numerical evaluation, the matrix \( A \) is repeatedly assembled within a bisection procedure for varying values of \( M \). 
\begin{figure}
    \centering
    \includegraphics[width=0.8\textwidth]{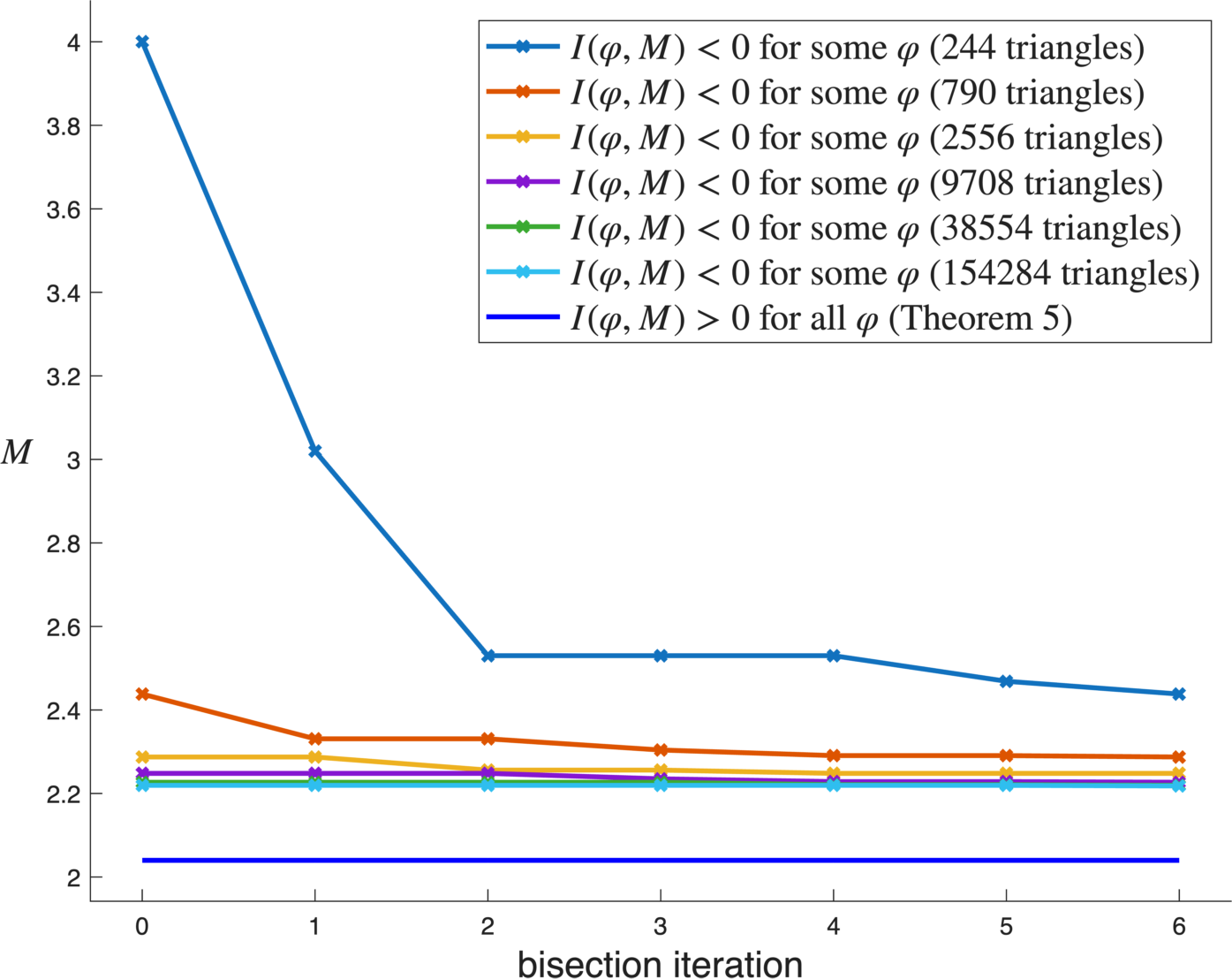}
    \vspace{-0.2cm}
    \caption{Bisection procedure for \( k = 50 \).}
    \label{fig:min_eigenvector_negative}
\end{figure}
The goal is to determine the minimal value \( M_{\text{num}} > M \) such that the corresponding minimal eigenvalue \( \lambda_{\min}(A) \) becomes negative; see Figure~\ref{fig:min_eigenvector_negative}. This value provides a numerical upper bound for the parameter \( M \) in Theorem~\ref{Th:delta}.

The results are summarized in Table~\ref{table:delta_M}. We observe a good agreement between the theoretical and numerical values for larger values of \( k \), where the inner domain \( R^{\delta} \) becomes very thin. For smaller values of \( k \), a noticeable gap appears, indicating potential room for improvement in Theorem~\ref{Th:delta}.

\begin{table}
\centering
\begin{tabular}{
    r
    @{\hspace{1.2cm}}
    c
    @{\hspace{1.2cm}}
    r
    @{\hspace{1.2cm}}
    r
}
\hline
$k$ & $\delta$ & $M$ (theoretical) & $M_{\text{num}}$ (upper bound) \\
\hline
1   & 0.5   & 4.000 & $< 4.024$ \\
2   & 0.25  & 3.000 & $< 3.953$ \\
3   & 0.167 & 2.667 & $< 3.762$ \\
4   & 0.125 & 2.500 & $< 3.547$ \\
5   & 0.100 & 2.400 & $< 3.371$ \\
10  & 0.050 & 2.200 & $< 2.842$ \\
20  & 0.025 & 2.100 & $< 2.480$ \\
30  & 0.017 & 2.067 & $< 2.341$ \\
40  & 0.013 & 2.050 & $< 2.267$ \\
50  & 0.010 & 2.040 & $< 2.220$ \\
100 & 0.005 & 2.020 & $< 2.121$ \\ 
200 & 0.003 & 2.010 & $< 2.067$ \\ 
300 & 0.002 & 2.007 & $< 2.047$ \\ 
$\infty$ & 0 & 2.000 & $< 2.031$ \\ 
\hline
\end{tabular}
\caption{Comparison between theoretical values of \( M \) from Theorem~\ref{Th:delta} and numerically obtained upper bounds \( M_{\text{num}} \).}
\label{table:delta_M}
\end{table}

\subsubsection{Homogeneous Dirichlet - Neumann boundary condition and oscilations}
The numerical approach also enables the investigation of mixed boundary conditions. 
\begin{figure}
\centering
\begin{minipage}{0.66\textwidth}
\centering
\begin{tikzpicture}[scale=0.8]
\node (A) at (-4,-2) {};
\node[right=4 of A.center] (B) {};
\node[right=6 of A.center] (O) {};

\node[above=4 of A.center] (A') {};
\node[above=4 of B.center] (B') {};
\node[above=4 of O.center] (O') {};

\fill[top color=gray!30, bottom color=gray!30] (A.center) rectangle node{$-4$} (B'.center);

\draw[thick] (A.center) -- (A'.center);
\draw[thick] (A.center) -- (B.center);
\draw[thick] (A'.center) -- (B'.center);

\node at ($(B)!0.5!(O')$) {0};

\node[below right=0.1 of A'.center] {$R_{-}$};
\node[below right=0.1 of B'.center] {$R^{\delta}$};

\draw[thick, dotted] (B.center) -- (O.center);
\draw[thick, dotted] (B'.center) -- (O'.center);
\draw[thick, dotted] (O.center) -- (O'.center);
\end{tikzpicture}
\end{minipage}
\hfill
\begin{minipage}{0.30\textwidth}
\small
\begin{align*}
R_{-} &= (-1,0)\times\left(-\tfrac12,\tfrac12\right), \\
R^{\delta} &= (0,\delta)\times\left(-\tfrac12,\tfrac12\right).
\end{align*}
\end{minipage}
\caption{Decomposition of the domain \( \Omega \) into subregions \( R_{-} \) and \( R^{\delta} \).
Solid lines indicate Dirichlet boundary conditions, while dotted lines correspond to Neumann (free) boundaries.}
\label{pic:pureinsulation_neumann}
\end{figure}
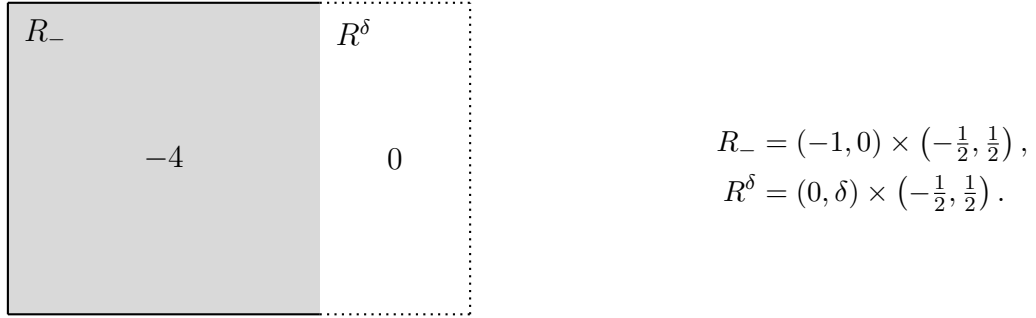
As a representative example, we consider the rectangular domain 
\[
\Omega = (-1,\delta)\times\left(-\tfrac12,\tfrac12\right), \qquad \delta \geq 0.
\] 
shown in Figure \ref{pic:pureinsulation_neumann}.
Homogeneous Dirichlet boundary conditions are imposed on
\[
\Gamma_D := \{ x \in \partial\Omega : x_1 \leq 0 \},
\]
and homogeneous Neumann boundary conditions on the remaining part
\[
\Gamma_N := \partial\Omega \setminus \Gamma_D.
\]
As \( \delta \to 0^+ \), the Neumann boundary degenerates to the vertical edge \( x_1 = 0 \).

The minimal eigenvalue and the associated eigenvector for \( \delta = 1 \) are shown in Figure~\ref{fig:min_eigenvector_neumann_full}. We observe a pronounced concentration of the determinant near the interface between \( R_{-} \) and \( R^{\delta} \).

To further investigate this effect, the domain is partitioned into layers in the \( x_1 \)-direction, and the contribution of the gradient term to the total energy is evaluated and normalized to one. Figure~\ref{fig:gradient_distribution} shows that the gradient part of the energy is likewise concentrated in a narrow region near the same interface.

\begin{figure}
    \centering
    \includegraphics[width=\textwidth]{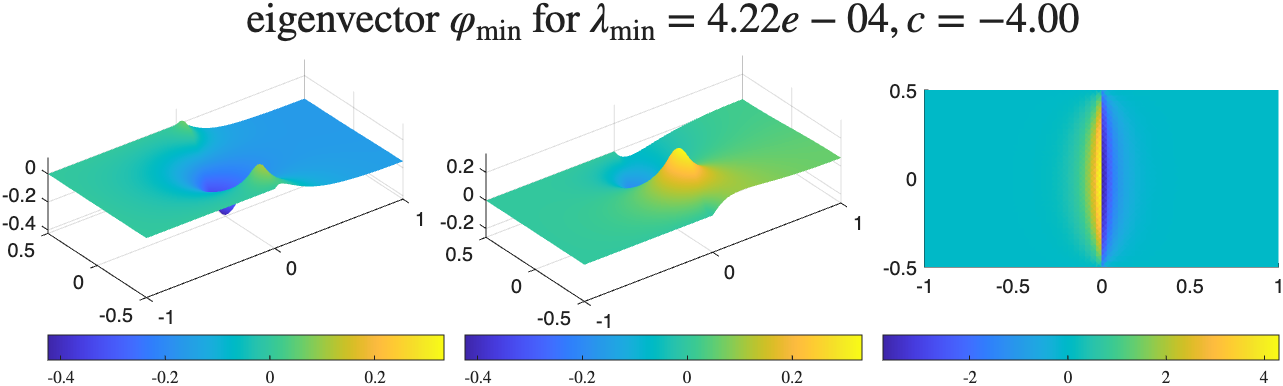}
    \caption{Components of \( \varphi_{\min} \) (first two columns) and \( \det(\nabla \varphi_{\min}) \) (third column) for \( c = -4 \) and \( \delta = 1\) on the mesh with 8192 triangles.}
    \label{fig:min_eigenvector_neumann_full}
    \centering
    \includegraphics[width=0.69\textwidth]{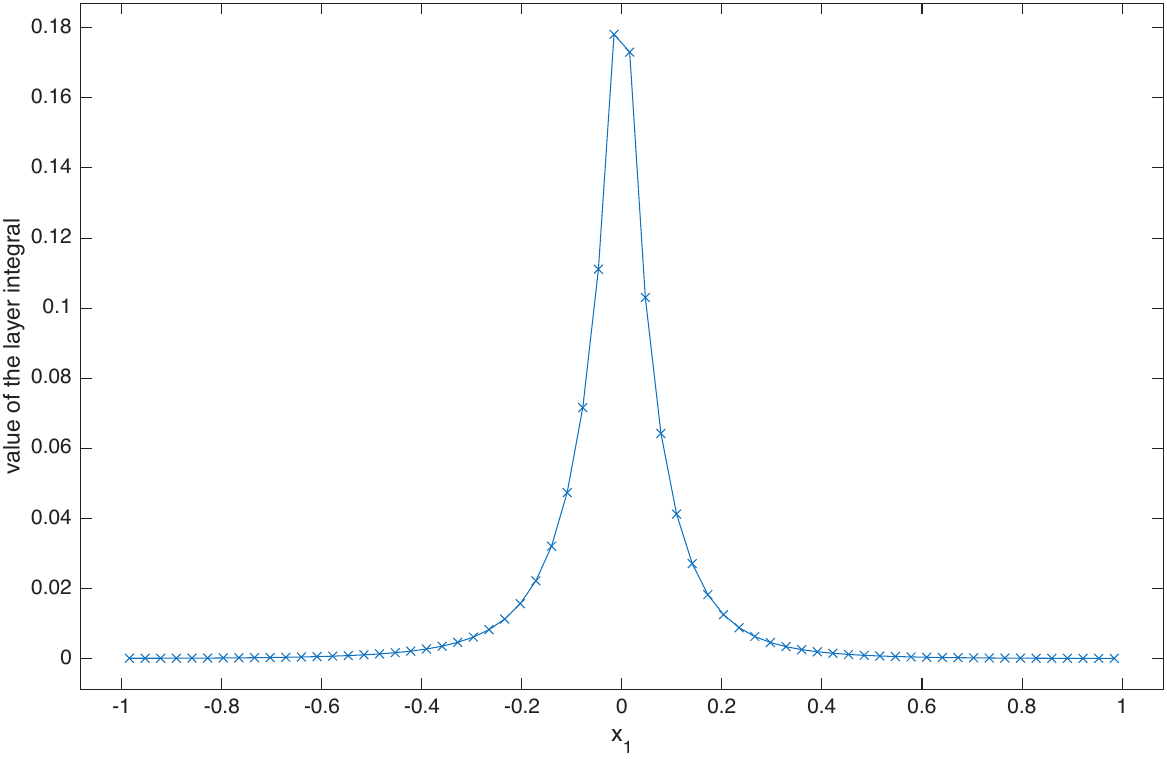}
    \caption{Distribution of the gradient contribution $|\nabla \varphi_{\min}|^2$ across \( x_1 \)-layers for \( c = -4 \) and \( \delta = 1 \) at level~4 mesh with 8192 triangles. Altogether, there are 64 \( x_1 \)-layers.}
    \label{fig:gradient_distribution}
    \centering
    \includegraphics[width=0.8\textwidth]{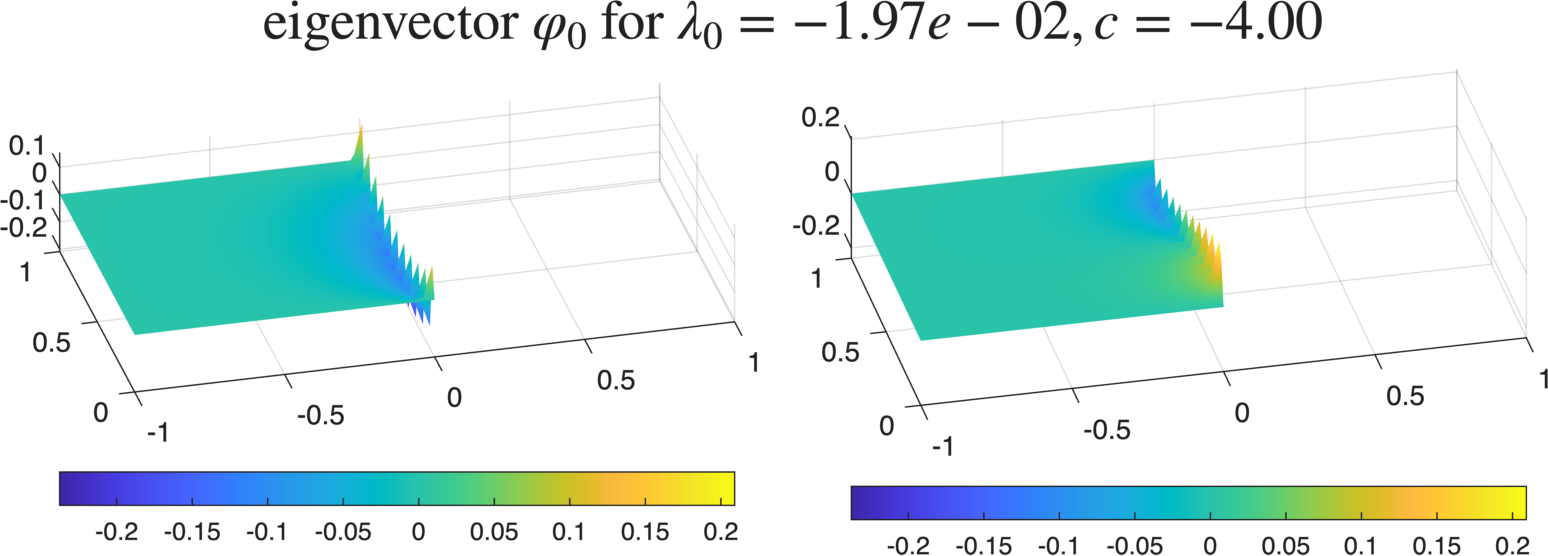}
    \caption{Components of \( \varphi_{\min} \) for \( c = -4 \) and \( \delta = 0\) on the level 4 mesh with 4032 triangles.
    }
    \label{fig:min_eigenvector_neumann}
\end{figure}
In the limiting case \( \delta = 0 \), the minimal eigenvector localizes near the interface \( x_1 = 0 \), exhibiting boundary-dominated behavior as a consequence of the degeneration of the Neumann boundary; see Figure~\ref{fig:min_eigenvector_neumann}.

\vspace{0.5cm}
\noindent
{\bf Data availability statement}\\[1mm]
\noindent The MATLAB code used to generate the numerical results is available for download at
\begin{center}
\url{https://www.mathworks.com/matlabcentral/fileexchange/130564} .
\end{center}
\bigskip 
\noindent
{\bf Acknowledgment}\\[1mm]
    \noindent MK and JV were partially supported by the GA\v{C}R project 23-04766S; MK was also supported by the subsequent GA\v{C}R project 26-21585K. The authors thank JB and the Department of Mathematics at the University of Surrey for their hospitality. JB gratefully acknowledges the hospitality of MK, JV, and the \'{U}TIA, Czech Academy of Sciences, during his visits.

\nocite{*}


\begin{thebibliography}{99}


\bibitem{Ba77} J. M. Ball: Convexity conditions and existence theorems in nonlinear elasticity.  {\it Arch. Rat. Mech. Anal.},  {\bf 63}, no. 4 (1977), 337--403.
J. M. Ball, J. Marsden. Quasiconvexity at the boundary, positivity of the second variation and elastic stability. {\it Arch.~Rat.~Mech.~Anal.} {\bf 86} (1984), 251--277. 


\bibitem{Bevan-14}
J. Bevan.  On double-covering stationary points of a constrained Dirichlet energy. {\it Ann. Inst. H. Poincar$\acute{\textrm{e}}$ Anal. Non Lin$\acute{\textrm{e}}$aire} {\bf 31} (2014),  391--411.


\bibitem{BKV23} 
J. Bevan, M. Kru\v{z}\'{i}k and J. Valdman. Hadamard's inequality in the mean.  Nonlinear Analysis {\bf(243)}, 
2024. https://doi.org/10.1016/j.na.2024.113523

\bibitem{BKV25} 
J. Bevan, M. Kru\v{z}\'{i}k and J. Valdman. New applications of Hadamard-in-the-mean inequalities to incompressible variational problems, Calculus of Variations and Partial Differential Equations {\bf(64)}, 259 (2025).

\bibitem{dacorogna} B. Dacorogna. {\it Direct Methods in the Calculus of Variations.} 2nd ed. Springer Science+Business Media, 2008. 

B. Dacorogna, P. Marcellini. {\it Implicit Partial Differential Equations.}  Progress in nonlinear
differential equations and their applications 37, Birkh\"{a}user, Boston, 1999.


\bibitem{BD22} M. Dengler, J. Bevan. A uniqueness criterion and a counterexample to regularity in an incompressible variational problem. Arxiv:2205.07694, 2022. 





I. Fonseca, S. M\"{u}ller, P. Pedregal. Analysis of concentration and oscillation effects generated by gradients. {\it SIAM J. Math. Anal.} {\bf 29} (1998), 736--756.

\referee

\bibitem{GiaquintaGiusti1982} M. Giaquinta and E. Giusti.  On the regularity of minima of variational integrals. {\it Acta Math.} {\bf 148} (1982), 31--46.

T. Iwaniec. Nonlinear Cauchy-Riemann operators in $\R^n$
{\it Trans. AMS} {\bf 354} (2002),   1961-–1995.

\bibitem{iwaniec-lutoborski-ARMA}
 T. Iwaniec, A. Lutoborski. Integral estimates for null Lagrangians. {\it Arch. Ration. Mech. Anal.} {\bf  125} (1993), 25--79.

 \bibitem{iwaniec-lutoborski-SIMA}
 T. Iwaniec, A. Lutoborski. Polyconvex functionals for nearly conformal deformations. {\it SIAM J. Math. Anal.}
 {\bf 27} (1996), 609--619.

 
 \bibitem{john}
 F. John. Uniqueness of non-linear elastic equilibrium for prescribed boundary displacements and sufficiently small strains. {\it Comm. Pure Appl. Math.} {\bf XXV} (1972), 617--634. 




\bibitem{mielke-sprenger}
A. Mielke, P. Sprenger. Quasiconvexity at the boundary and a simple variational formulation of Agmon's condition. {\it J. Elast.} {\bf 51} (1998),  23--41.


\bibitem{Mo66}  C. B. Morrey, Jr.  Multiple integrals in the calculus of variations. Reprint of the 1966 edition. Classics in Mathematics. Springer-Verlag, Berlin, 2008. 

\bibitem{neff}
D.Y. Gao, P. Neff, I. Roventa, C. Thiel. On the convexity of nonlinear elastic energies in  the right Cauchy-Green tensor. {\it J. Elasticity} {\bf 127} (2017), 303--308.


\bibitem{spector}
J. Sivaloganathan, S.J. Spector. On the uniqueness of energy einimizers in finite elasticity. {\it J. Elast.} {\bf 133} (2018), 73--103. 

\bibitem{spector2}
D.E. Spector, S.J. Spector. Uniqueness of equilibrium with sufficiently small strains in finite elasticity. {\it Arch. Ration. Mech. Anal.} {\bf 233} (2019), 409--449.\ 
\end{thebibliography}

\end{document}